\documentclass[10pt,a4paper,twoside]{amsart}

\usepackage{amsfonts, amssymb, amsmath, amsthm, bm}
\usepackage{mathrsfs} 
\usepackage{latexsym}
\usepackage{enumerate}
\usepackage{multicol}
\usepackage{verbatim}
\usepackage{dsfont}

\usepackage{url}
\usepackage{csquotes}
\usepackage[%
    backend=bibtex,
    style=numeric-comp,
    citetracker=true,
    pagetracker=true,
    hyperref=true,
    backref=true,
    giveninits=true,
    bibencoding=utf8
    ]{biblatex}

\usepackage[colorlinks=true]{hyperref}
\hypersetup{citecolor=blue, linkcolor=blue}
\usepackage{mathabx} 

\newtheorem{thm}{Theorem}[section]
\newtheorem{lem}[thm]{Lemma}

\usepackage{showtags} 

\newcommand{\ve}{\varepsilon}
\newcommand{\Ocal}{\mathcal{O}}
\def\gP{\mathfrak{P}}
\def\gB{\mathfrak{B}}
\def\1{1\!\!\!1}

\title{Bounding the exponential sum on squares of some sifted sequences%
}
\author{E. Malavika}
\address[E. Malavika]{Indian Institute of Science Education and Research, 
Berhampur, 
Laudigam, Ganjam, 760003, India.}
\email{malavikae@iiserbpr.ac.in}
\author{Olivier Ramar\'e}
\address[O. Ramar\'e]{CNRS/ Institut de Math\'ematiques de Marseille, 
Aix Marseille Universit\'e, U.M.R. 7373, Site Sud, Campus de Luminy, 
Case 907, 13288 Marseille Cedex 9, France.}
\email{olivier.ramare@univ-amu.fr}

\begin{document}

\subjclass[2010]{ 11L07 , 11N25}

\keywords{Exponential sums , Sum of two squares}


\begin{abstract}
  Let $\gB$ denote the collection of odd primitive Gaussian integers and $n\mapsto b(n)$ denote the characteristic function of elements of $\gB$. We prove that the exponential sum $ S(\alpha; N)=\sum_{n\le N}b(n)e(n^2\alpha)$ satisfies 
   \begin{equation*}
       \frac{S(\alpha;N)}{N/\sqrt{\log N}}
       \ll  N^\epsilon (q^{-1/4}+N^{-1/2}q^{1/4}+N^{-1/8}),
  \end{equation*}
  where, $(a,q)=1$ and $|\alpha - a/q | < 1/q^2$. Though we specialized on sums of two squares, these results extend to more general sequences.
 
\end{abstract}

\maketitle


\section{Introduction and results}

In 1937 in~\cite{Vinogradov*37}, I.~M.~Vinogradov proved that every large enough
odd integer can be written as a sum of three primes, a result hitherto proved only
under the generalized Riemann Hypothesis by G.\,H.~Hardy \& J.\,R.~Littlewod
in~\cite{Hardy-Littlewood*22a}. A main step of Vinogradov's proof, clearly exposed in the monograph~\cite{Vinogradov*54}, is to bound the trigonometric polynomial 
over the primes $\sum_{p\le N}e(p\alpha)$. In~\cite{Vaughan*77b}, see 
also~\cite{Vaughan*80}, R.\,C.~Vaughan simplified this technique, and it is this simplification that A.~Ghosh used in~\cite{Ghosh*81} to bound the 
trigonometric polynomial $\sum_{p\le N}e(\alpha p^2)$. 

In \cite{Ramare-Viswanadham*24}, the second named author together with 
G.\,K.~Viswanadham considered the question of giving bounds for the
trigonometric polynomial over more general sifted sequences. As the techniques 
of Vinogradov or of Vaughan are not applicable here. The
primary example, which we also use in the present paper,
is the trigonometric polynomial of the
squares of the set~$\gB$ of odd integers that may be written
as sums of two coprime squares. We provide below bounds for this 
polynomial that are as good as the ones of A.~Ghosh.
As it turns out the techniques used
will prove to be
general enough to handle more cases, including the one of the primes, so our 
result generalizes the one of Ghosh while providing another access to it,
but let us consider in this first stage only one situation. The case of squares
appears also to deserve a special treatment among polynomials. Technically, 
this will come from the fact that the number of representations 
of an integer $j\le M^2$ as a difference of two squares~$m_1^2-m_2^2$ 
with~$m_1$ and~$m_2$ of about a same size~$M$ is small, which further implies
that the number of such $j$'s is large.

Let us notice that the elements of $\gB$ are all congruent to~1
modulo~4. This implies in particular that if a prime $p\equiv3[4]$
divides such an element, then at least another prime $\equiv 3[4]$ does so (which
may be the same, says $p$, meaning that $p^2$ divides our candidate).
As a conclusion, the odd integers below $N$ that are in~$\gB$ are the
integers below $N$ that are congruent to 1 modulo~4 and do not have
prime divisors $p\le \sqrt{N}$ congruent to~3 modulo~4.

Let us denote the characteristic function of elements of $\gB$ (we shall refer to them as \emph{odd primitive Gaussian integers})
by~$n\mapsto b(n)$.
Our aim is to understand
the trigonometric polynomial
\begin{equation}
  \label{defSalphaN}
  S(\alpha; N)=\sum_{n\le N}b(n)e(n^2\alpha).
\end{equation}

Here is our first result.
\begin{thm}
  \label{OneSum}
  Suppose that $|\alpha - a/q | < 1/q^2$, where $\alpha$ is a real number 
  with~$a$ and~$q$ integers satisfying $(a,q) = 1$. We have
  \begin{equation*}
    \frac{S(\alpha;N)}{N/\sqrt{\log N}}
    \ll  N^\epsilon (q^{-1/4}+N^{-1/2}q^{1/4}+N^{-1/8}).
  \end{equation*}
\end{thm}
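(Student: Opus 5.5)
We plan to follow the sieve-theoretic route of \cite{Ramare-Viswanadham*24} instead of a Vaughan-type identity. The idea is to express $b$ as a bilinear (type II) convolution plus a harmless type I part. Write $\mathcal{P}_3$ for the primes $p\equiv 3[4]$. Then $b(n)=\1_{n\equiv1[4]}\1_{(n,P_3(\sqrt N))=1}$. For a parameter $z=N^{\delta}$ we use a Buchstab-type decomposition, or equivalently the local model of \cite{Ramare-Viswanadham*24}:
\begin{equation*}
  b(n)=\1_{n\equiv 1[4]}\1_{(n,P_3(z))=1}-\sum_{\substack{p\in\mathcal{P}_3\\ z<p\le\sqrt N}}\ \sum_{\substack{n=pm\\ (m,P_3(p))=1}}\1_{n\equiv1[4]} .
\end{equation*}
The first term is sieved only by small primes. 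We replace it by a Selberg/Barban--Vehov upper and lower bound sieve weight $\sum_{d\mid n,\,d\le D}\lambda_d$, with $D=N^{\epsilon'}$. This is possible because positivity is not needed for an exponential sum only if the error is controlled. Instead, we keep the exact Möbius expansion truncated by the fundamental lemma, whose remainder is small in $\ell^1$ norm, namely $\ll N(\log N)^{-1/2}\exp(-s)$ with $s=\log D/\log z$. This gives type I sums $\sum_{d\le D}\lambda_d\sum_{m\le N/d,\,dm\equiv1[4]}e(d^2m^2\alpha)$.

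For the type I sums we use Weyl differencing once on the quadratic phase $m\mapsto d^2m^2\alpha$. This gives $\sum_m e(d^2m^2\alpha)\ll (N/d)^{1/2}+ \bigl(\sum_{h\le N/d}\min(N/d,\|2hd^2\alpha\|^{-1})\bigr)^{1/2}$. Summing over $d\le D$ with Cauchy--Schwarz and the standard lemma $\sum_{x\le X}\min(Y,\|x\alpha\|^{-1})\ll (X/q+1)(Y+q\log q)$, after collecting $x=2hd^2$ with divisor-bound multiplicity $N^\epsilon$, yields $N^{1+\epsilon}(q^{-1/2}+N^{-1/2}+qN^{-2})^{1/2}$ times $D^{O(1)}$. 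Since $D=N^{\epsilon'}$, this is admissible and in fact better than required.

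The second term is a genuine bilinear sum $\sum_{p}\sum_m a_p\beta_m e(p^2m^2\alpha)$ with $p\in(z,\sqrt N]$. The coefficient $\beta_m$ carries the dependence $(m,P_3(p))=1$. We decouple it from $p$ by dyadically localizing $p\sim P$ and using the standard device of writing the condition $p\le$ smallest prime factor via a Perron integral, at a cost of $N^\epsilon$. If $P\le N^{1/2}$ but the range $M=N/P$ is large, we apply Cauchy--Schwarz over $m$ and expand. This gives $\sum_{p_1,p_2\sim P}\bigl|\sum_{m\sim M}e((p_1^2-p_2^2)m^2\alpha)\bigr|$. We then Weyl-difference again, or better, Cauchy once in the other order. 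The quantity we arrive at is $\sum_{j}r(j)\min(M^2,\|j\alpha\|^{-1})$, where $j=(p_1^2-p_2^2)(m_1^2-m_2^2)$-type products. This is exactly where the remark from the introduction enters: the number $r(j)$ of representations of $j$ as a difference of two squares of size $\asymp P$ (resp. $M$) is $\ll N^\epsilon$. Hence the sum is $\ll N^\epsilon\sum_{j\le P^2M^2}\min(\cdot)$, which the reciprocal-sum lemma bounds by $N^{2+\epsilon}(q^{-1}+N^{-2}q+\dots)$. After taking square roots twice, we obtain the shape $N^{1+\epsilon}(q^{-1/4}+N^{-1/2}q^{1/4}+P^{-1/4}+M^{-1/4})$. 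With $P\ge z$ and $M\ge\sqrt N$ we would get $N^{-1/8}$ provided $z\ge N^{1/2}$, which is not available. Therefore the remaining obstacle is the range of small $p$, namely $z<p\le N^{1/2}$ with $p$ tiny.

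The main difficulty is therefore the ranges $p\in(z,N^{\epsilon''}]$ and the large-$p$ end. For small $p$, we would iterate the Buchstab decomposition, so that $m$ itself is factored further. After a bounded number of iterations, every term is either (a) of type I with modulus $\le N^{\epsilon}$, or (b) bilinear with a variable in $[N^{1/4},N^{3/4}]$. The balance point $N^{1/4}\le P\le N^{3/4}$ is exactly what gives the term $N^{-1/8}$ via $P^{-1/4},M^{-1/4}\le N^{-1/16}$ after the double square root. We would check the exponents so that it is $N^{-1/8}$. Because each step gains only a constant factor in the sieve dimension, the number of iterations is $O_\epsilon(1)$, and the loss is $N^\epsilon$. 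The savings relative to the trivial $N/\sqrt{\log N}$ are then absorbed by $N^\epsilon$. I expect the delicate points to be the separation of variables under the rough-number condition and checking that the bilinear ranges reached cover all of $[z,\sqrt N]$.
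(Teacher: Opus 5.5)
\textbf{Genuine gap: the sieve step cannot give a power saving.} With $z=N^{\delta}$ and $D=N^{\epsilon'}$ as you set them, $s=\log D/\log z=\epsilon'/\delta$ is bounded. The fundamental-lemma remainder $N(\log N)^{-1/2}e^{-s}$ is then of the order of the trivial bound. Making it $\ll N^{7/8}$ requires $s\log s\gg\log N$, which (even for $D\le N^{1/2}$) forces $z\le(\log N)^{O(1)}$. The Buchstab primes $p\in(z,N^{1/4})$ are then covered by no bilinear estimate, and your remedy fails either way:
\begin{itemize}
\item If $z$ is polylogarithmic, a bounded number of further Buchstab steps multiplies together only boundedly many primes $>z$. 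Their product need not approach $N^{1/4}$; that takes $\asymp\log N/\log\log N$ steps.
\item If you keep $z=N^{\delta}$ so that the iteration is finite, every terminal term is again an exponential sum over integers sifted by the primes $\equiv3[4]$ below a power of $N$. That is the original difficulty.
\end{itemize}
Moreover, the type I moduli produced by the iteration are products $p_1\cdots p_k$ up to $N^{1/4}$, not $\le N^{\epsilon}$, so a type I bound with a $D^{O(1)}$ loss is not the relevant one.

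The missing ingredient is an \emph{exact} identity that keeps the M\"obius expansion only up to a level $M$ that is a power of $N$, and rewrites the tail as bilinear forms with one variable $\ge M$. This is Lemma~\ref{OR-KV}, which the paper applies with $z=M$. It produces a type I sum of level $M$, the Buchstab term with $p\in[z,\sqrt N)$, bilinear sums with $\ell\in[M,Mz]$, and an error $O(N/z)$. (Iterating Buchstab to completion from $z\le 3$, with the stopping rule $d\ge M$, gives an identity of the same kind with no fundamental lemma at all.) Since the type I level is now a power of $N$, Lemma~\ref{linear+congruence} gives $N^{1/2}M+NM^{1/2}q^{-1/2}+(Mq)^{1/2}$. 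This must be balanced against $N/M^{1/2}$ by choosing $M$ in terms of $q$ ($M=q^{1/2}$, resp.\ $M=N/q^{1/2}$). That balancing is invisible in your $D=N^{\epsilon'}$ set-up, and it is part of where $Nq^{-1/4}$ and $N^{1/2}q^{1/4}$ come from.

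\textbf{Secondary error: the bilinear exponents.} Your shape $P^{-1/4}+M^{-1/4}$ would indeed need $P\ge N^{1/2}$. Your closing claim that variables in $[N^{1/4},N^{3/4}]$ give $N^{-1/8}$ contradicts it, since it only gives $N^{-1/16}$. The fix is to apply Cauchy--Schwarz first over the \emph{long} variable $m\sim M\ge\sqrt N$:
\begin{itemize}
\item The diagonal $p_1=p_2$ costs $NM$, i.e.\ $NP^{-1/2}$ after taking the square root.
\item Off the diagonal, group $j=p_1^2-p_2^2$ (multiplicity $\ll N^{\epsilon}$) and apply Cauchy--Schwarz over the whole interval $|j|\ll P^2$.
\item Expand, and handle $g=m_1^2-m_2^2$ (again multiplicity $\ll N^{\epsilon}$) with Lemma~\ref{Vino}.
\end{itemize}
The result is $N^{\epsilon}\bigl(NP^{-1/2}+NM^{-1/4}+Nq^{-1/4}+N^{1/2}q^{1/4}\bigr)$, which is Lemma~\ref{Bilinear1} with $W=M$. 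It is the saving $P^{-1/2}$ that makes short variables $\ge N^{1/4}$ sufficient. Combined with the exact identity above, this is what makes a choice of $z=M$ as a power of $N$ workable.
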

The same bound holds true for the primes, recovering the result of A.~Ghosh.
We also prove the next estimate that proves useful when 
locating $\{\alpha b^2\}$ in intervals modulo~1.
\begin{thm}
  \label{SeveralSum}
  Notation being as in Theorem~\ref{OneSum}, we have
  \begin{equation*}
    \frac{\sqrt{\log N}}{N}
    \sum_{h\le H}|S(h\alpha;N)|
    \ll  H(Nq)^{\epsilon}\bigl( 
    q^{-1/4}+N^{-1/8}+H^{-1/4}N^{-1/2}q^{1/4}\bigr).
  \end{equation*}
\end{thm}

\subsection*{The trigonometric polynomial on Loeschian integers and more}
Let $\ell(n)$ be the characteristic function of those integers that may be 
written in the form $u^2+uv+v^2$, with $u$ and $v$ being coprime. These
integers are also those that have no prime factors $\equiv2[3]$, and are called the
primitive Loeschian integers. Our method directly applies to this case and both
Theorem~\ref{OneSum} and~\ref{SeveralSum} hold true. The same is also mutatis
mutandis true for
the trigonometric polynomial on integers that are both Gaussian and Loeschian, 
see the paper \cite{Fouvry-Levesque-Waldschmidt*18} by \'E.~Fouvry, C.~Levesque
and M.~Waldschmidt that explains the appearance of such a sequence. Another
interest is that this
latter sequence is of sieve dimension $3/4$, while the ones of Gaussian or 
Loeschian integers is of dimension~$1/2$ and the one of primes of dimension~$1$.

\subsubsection*{Notation}

It is typographically expedient to define
\begin{equation}
  \label{defgPz}
  \gP(z)=\prod_{\substack{p<z\\ p\in\gP}}p
\end{equation}
for any set of primes $\gP$ and the special case
$P_{4;3}(z)=\prod_{\substack{p<z\\ p\equiv 3[4]}}p$.

\section{Auxiliaries}
Here is a tool we borrow from \cite[Theorem
1.4]{Ramare-Viswanadham*24}.

\begin{lem}
  \label{OR-KV}
  For any set of parameters $\max(2,z)\le Z\le N$ and $2\le M_0\le M$,
  there exist sequences $(\alpha_\ell(t))$ and
  $(\beta_k(t))$ such that, for any sequence $(u_n)_{n\le N}$, we have
  \begin{multline*}
    \sum_{(n,\gP(Z))=1}\mkern-12mu u_n
    =\sum_{\substack{d<M\\d|\gP(z)}}\mu(d)\sum_{n\equiv 0[d]}u_n
    -\sum_{\substack{mp\le N\\ z\le p<
        Z}}
    \rho(m)\1_{p\in\gP}u_{mp}
    \\-
    \int_{0}^1
    \mkern-5mu\sum_{\substack{k\ell\ge M\\ M_0\le \ell\le M_0z}}\mkern-12mu
    \alpha_\ell(t)\beta_k(t)
    u_{k\ell}
    dt
    +
    \Ocal\biggl(
   \sqrt{ \frac{N\sum_{n}|u_n|^2}{z}}
    \biggr)
  \end{multline*}
  where $|\alpha_\ell(t)|\le 1$ and
  $|\beta_k(t)|\le 16\tau_3(k)\log N$ and $\alpha_\ell(t)$ vanishes on
  indices~$\ell$ that do not divide $\gP(z)$, while
  $\rho(m)\in[0,1]$. In case~$|u_n|\le 1$, the error term may be
  improved to $\Ocal(N/z)$.
\end{lem}

\begin{lem}
  \label{Vino}
 Suppose that $X\ge1$ and $Y\ge1$ are positive real numbers.
 Also suppose that $|\alpha - a/q | < 1/q^2$, where $\alpha$ is a real
 number with $a$ and $q$ integers satisfying $(a,q) = 1$. Then we have
 \begin{equation*}
   \sum_{n\le X}\min\bigl(Y,1/\|\alpha n\|\bigr)
   \ll \frac{XY}{q}+(X+q)\log 2q.
 \end{equation*}
\end{lem}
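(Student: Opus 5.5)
We prove Lemma~\ref{Vino} by the classical Vinogradov argument: split the range $n\le X$ into blocks of $q$ consecutive integers, and show that on each block the points $\alpha n$ are well spaced modulo one.

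\emph{Setup.} Write $\alpha=a/q+\theta/q^2$ with $|\theta|<1$. Cover $[1,X]$ by at most $X/q+1$ blocks of the form $n=n_0+r$ with $0\le r<q$. For fixed $n_0$ we have
\begin{equation*}
  \alpha n = \alpha n_0 + \frac{ar}{q}+\frac{\theta r}{q^2}.
\end{equation*}
The perturbation satisfies $|\theta r/q^2|<1/q$.

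\emph{Spacing on one block.} As $r$ runs over $0\le r<q$, the residues $ar$ run over all classes modulo $q$, since $(a,q)=1$. Hence the points $\alpha n$ are the shifted grid points $\alpha n_0+j/q$, for $j\in\mathbb{Z}/q\mathbb{Z}$, each moved by less than $1/q$. It follows that each interval of length $1/q$ modulo one contains $\Ocal(1)$ of the points $\alpha n$ in the block. Ordering the values $\|\alpha n\|$, at most $\Ocal(1)$ of them lie in $[0,1/q)$, and these contribute at most $\Ocal(Y)$. The remaining values are $\ge c\,k/q$ for the $k$-th one, $k$ ranging up to about $q/2$. They contribute
\begin{equation*}
  \ll \sum_{1\le k\le q}\frac{q}{k}\ll q\log 2q.
\end{equation*}
So each block contributes $\ll Y+q\log 2q$.

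\emph{Summing over blocks.} There are $\ll X/q+1$ blocks, so the total is
\begin{equation*}
  \ll \Bigl(\frac{X}{q}+1\Bigr)(Y+q\log 2q) = \frac{XY}{q} + Y + X\log 2q + q\log 2q.
\end{equation*}

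\emph{Removing the stray $Y$.} The only obstacle is the extra term $Y$ coming from the "$+1$" block, which is not in the claimed bound when $X<q$.
\begin{itemize}
\item If $X\ge q$, then $Y\le XY/q$, so the $Y$ term is absorbed.
\item If $X<q$, there is a single block, and $n$ ranges over $1\le n\le X<q$. Here $n=0$, i.e.\ $r=0$ relative to a block starting at $0$, is excluded. For $1\le n<q$ we have $an\not\equiv 0\ [q]$, so $\|an/q\|\ge 1/q$. Since $|\theta n/q^2|<1/q$, the small values of $\|\alpha n\|$ still need care. We handle this by noting that $\|\alpha n\|\ge\|an/q\|-|\theta|n/q^2$, and treating the $\Ocal(1)$ indices with $\|an/q\|$ at the minimal levels by the trivial bound $\min(Y,\cdot)\le q$. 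This gives a total of $\ll q\log 2q$.
\end{itemize}
(Alternatively, one may simply invoke the standard statement as in Vaughan's book, Lemma 2.2, of which this is a restatement.)
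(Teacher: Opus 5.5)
There is a genuine gap in your last bullet, the case $X<q$. Your block decomposition, the per-block bound $\ll Y+q\log 2q$, and the case $X\ge q$ are all fine.

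The gap is the claim that the $\Ocal(1)$ exceptional indices satisfy $\min(Y,1/\|\alpha n\|)\le q$, which is false. When $\|an/q\|=1/q$, the inequality $\|\alpha n\|\ge\|an/q\|-|\theta|n/q^2$ only yields $\|\alpha n\|>(q-n)/q^2$. This can be of order $1/q^2$ when $n$ is close to $q$. Concretely, take $a=1$, $\theta=1-1/q$ and $n=X=q-1$. One computes $\|\alpha n\|=(2q-1)/q^3<2/q^2$, so for $Y=q^2/2$ the sum is at least $q^2/2$. Hence your conclusion ``total $\ll q\log 2q$'' is actually wrong when $q/2\le X<q$ and $Y$ is large. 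The lemma survives in that range only because $XY/q\gg Y$ there.

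The repair is to put the threshold at $q/2$ instead of $q$.
\begin{itemize}
\item If $X\ge q/2$, then $Y\le 2XY/q$ and the stray term is absorbed.
\item If $X<q/2$, then every $n\le X$ satisfies $|\theta|n/q^2<1/(2q)\le\frac12\|an/q\|$, using $q\nmid an$. Hence $\|\alpha n\|>\frac12\|an/q\|$ for all such $n$.
\end{itemize}
In the second case, $n\mapsto an$ is injective modulo $q$ on $1\le n<q$. So each value $j/q$ with $1\le j\le q/2$ is taken by $\|an/q\|$ at most twice. This gives $\sum_{n\le X}\|\alpha n\|^{-1}\le 4\sum_{1\le j\le q/2}q/j\ll q\log 2q$.

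With this change your argument is the standard proof of the estimate. The paper gives no proof of its own here; it simply quotes Lemma~1 of Vaughan's paper, which is also the effect of your closing parenthetical fallback.
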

This is a classical lemma of Vinogradov in the form given in Lemma~1
of~\cite{Vaughan*77} by R.~C.~Vaughan. 

We end this section with a lemma that will help us treat the
condition~$X/2<mn\le X$. This (harmless) condition may often be
treated directly by inequalities. The next process is more general,
and is for instance used in Lemma~2 of~\cite{Vaughan*80}. 
\begin{lem}
    \label{FourierVaughan}
    For $x>0$ and $T>0$, we have
    \begin{equation*}
        \1_{x/2<\beta \le x}(\beta)
        =\int_{-T}^T e^{i\beta t}\frac{\sin(xt)-\sin(xt/2)}{\pi t}dt
        +
        \Ocal\biggl(\frac{1}{T\min(|\beta-x|,|\beta-x/2|)}\biggr).
    \end{equation*}
    We also have $\int_{-T}^T |\frac{\sin(xt)-\sin(xt/2)}{\pi t}|dt \ll x+\log \max(1,T)$.
\end{lem}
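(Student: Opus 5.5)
We prove Lemma~\ref{FourierVaughan} from the Dirichlet integral
\begin{equation*}
  \int_{-T}^{T}\frac{\sin(\lambda t)}{\pi t}\,dt
  =\frac{2}{\pi}\int_0^{T}\frac{\sin(\lambda t)}{t}\,dt
  =\operatorname{sgn}(\lambda)+\Ocal\Bigl(\frac{1}{T|\lambda|}\Bigr),
  \qquad \lambda\neq0 .
\end{equation*}
This follows by substituting $u=|\lambda| t$ and using $\int_0^\infty \sin u\,du/u=\pi/2$. The tail $\int_{|\lambda|T}^\infty \sin u\,du/u$ is $\Ocal(1/(|\lambda|T))$ after one integration by parts.

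Since the kernel $(\sin(xt)-\sin(xt/2))/(\pi t)$ is even in $t$, only the cosine part of $e^{i\beta t}$ survives, and
\begin{equation*}
  e^{i\beta t}\bigl(\sin(xt)-\sin(xt/2)\bigr)
  \;\rightsquigarrow\;
  \tfrac12\bigl[\sin((x+\beta)t)+\sin((x-\beta)t)-\sin((x/2+\beta)t)-\sin((x/2-\beta)t)\bigr].
\end{equation*}
Here we use $\cos A\sin B=\tfrac12[\sin(B+A)+\sin(B-A)]$. Applying the Dirichlet formula to each of the four frequencies gives the main term
\begin{equation*}
  \tfrac12\bigl[\operatorname{sgn}(x+\beta)+\operatorname{sgn}(x-\beta)-\operatorname{sgn}(x/2+\beta)-\operatorname{sgn}(x/2-\beta)\bigr].
\end{equation*}
For $\beta>0$ the two terms with $x+\beta$ and $x/2+\beta$ cancel. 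The remaining bracket $\tfrac12[\operatorname{sgn}(x-\beta)-\operatorname{sgn}(x/2-\beta)]$ equals $1$ when $x/2<\beta<x$ and $0$ outside $[x/2,x]$.

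The errors coming from $x-\beta$ and $x/2-\beta$ are $\Ocal(1/(T\min(|\beta-x|,|\beta-x/2|)))$, which is exactly the stated error. The errors coming from $x+\beta$ and $x/2+\beta$ are $\Ocal(1/(T(x/2+\beta)))$. These are smaller, since $x/2+\beta\ge|\beta-x/2|$ when $\beta\ge0$.

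\textbf{Restriction on $\beta$.} The statement is only meaningful for $\beta>0$, which is how it is used ($\beta=\log(mn)$ or $mn$). For $\beta\le 0$ the same computation gives main term $0$, but the error is only $\Ocal(1/(T|\beta+x/2|))$. This is still bounded by the stated error up to a constant as long as $\beta\ge -x/4$, say. I would therefore state and use the lemma for $\beta>0$, and remark on this restriction.

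\textbf{Boundary points.} At $\beta=x$ or $\beta=x/2$ the error term is infinite, so there is nothing to prove.

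\textbf{$L^1$ bound.} On $|t|\le 1/x$, use $|\sin(xt)-\sin(xt/2)|\le \tfrac32 x|t|$, so the integrand is $\le x$ and this range contributes $\Ocal(1)$. On $1/x\le|t|\le T$, bound the integrand by $2/(\pi|t|)$, giving $\Ocal(\log(Tx))$ when $Tx>1$. This yields
\begin{equation*}
  \int_{-T}^{T}\Bigl|\frac{\sin(xt)-\sin(xt/2)}{\pi t}\Bigr|\,dt \ll 1+\log^+(Tx).
\end{equation*}
Since $\log^+(Tx)\le \log\max(1,T)+\log^+ x\le \log\max(1,T)+x$, this is $\ll x+\log\max(1,T)$ whenever $x\gg1$; I would assume $x\ge 1$ (or absorb the constant), as in the intended applications.

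The only real obstacle is getting the sign conventions and the four-term cancellation right. The uniform tail estimate for the Dirichlet integral is the one analytic input.
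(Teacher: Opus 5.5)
The paper does not prove this lemma; it only refers to Lemma~2 of Vaughan's paper for the device. The stated form is what one gets from the single-cutoff truncated Dirichlet integral at $x$ and at $x/2$, by subtraction. Your argument is exactly this standard derivation: evenness of the kernel, the product-to-sum formula, and $\int_{-T}^{T}\frac{\sin(\lambda t)}{\pi t}\,dt=\operatorname{sgn}(\lambda)+\Ocal(1/(T|\lambda|))$ applied to the four frequencies $x\pm\beta$, $x/2\pm\beta$. It is correct for $\beta\ge0$, which covers the paper's use with $\beta=\log mn$.

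Two of your side remarks need correcting.

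\textbf{The case $\beta\le0$.} The main term does not vanish for all $\beta\le 0$. The integral only involves $\cos(\beta t)$, so it is even in $\beta$ and its main term is $\1_{x/2<|\beta|<x}$. For $-x<\beta<-x/2$ the integral therefore tends to $1$ as $T\to\infty$. The left-hand side is $0$ there, and the stated error is below $1/(Tx)$. So the lemma as literally stated is false on that range, and a hypothesis such as $\beta\ge0$ (or $\beta\ge -x/4$, where your argument does go through) is genuinely required, not just a convenience.

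\textbf{The $L^1$ bound for small $x$.} The restriction $x\ge1$ is unnecessary. Also, ``absorbing the constant'' is not legitimate when $x$ and $\log\max(1,T)$ are both small, so as written your proof does not cover that case. Keep the first range exact instead. Since $|\sin(xt)-\sin(xt/2)|\le x|t|/2$, the range $|t|\le\min(T,1/x)$ contributes at most $\min(1,xT)/\pi$.
\begin{itemize}
\item If $T\le e$, then $\min(1,xT)+\log^+(xT)\le ex+\log^+(ex)\le(e+1)x$, because $\log(ex)\le x$.
\item If $T>e$, then $\min(1,xT)\le 1<\log T$ and $\log^+(xT)\le x+\log T$.
\end{itemize}
Hence the bound $\ll x+\log\max(1,T)$ holds for all $x,T>0$.
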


\section{On diagonal binary or quadric quadratic forms}
Let us start this section with a wide-ranging result. 
\begin{lem}
  \label{Vaughan-Wooley}
  Let $\epsilon>0$.
  The number of integral solutions of the equation
$ax^2+by^2=c$ with $abc\neq0$ and $1\le x,y\le P$ is $\ll_\epsilon (|abc|P)^\epsilon$.
\end{lem}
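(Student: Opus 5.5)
Our plan is to reduce the count to divisor-type bounds in an imaginary quadratic order. We may assume $P\ge2$ and $\gcd$-reduce nothing at first. Multiplying the equation $ax^2+by^2=c$ by $a$, we obtain
\begin{equation*}
  (ax)^2+ab\,y^2=ac .
\end{equation*}
Set $X=ax$ and $D=ab$. Distinct solutions $(x,y)$ give distinct pairs $(X,y)$, so it suffices to bound the number of integer pairs $(X,y)$ with $X^2+Dy^2=n$, where $n=ac\neq0$, $|X|\le |a|P$ and $1\le y\le P$.

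\textbf{Case $D>0$.} The form $X^2+Dy^2$ is positive definite, so we need $n>0$. Factor the equation in the ring $\mathbb{Z}[\sqrt{-D}]$ as $(X+y\sqrt{-D})(X-y\sqrt{-D})=n$. We then embed into the maximal order $\mathcal{O}_K$ of $K=\mathbb{Q}(\sqrt{-D})$. The element $X+y\sqrt{-D}$ generates a principal ideal of norm $n$, and two such elements give the same ideal only if they differ by a unit, of which there are at most $6$. The number of ideals of norm $n$ in $\mathcal{O}_K$ is $\sum_{d\mid n}\chi_K(d)\le\tau(n)$. Hence the count is $\ll\tau(|ac|)\ll_\epsilon|ac|^{\epsilon}$, uniformly in $D$.

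\textbf{Case $D<0$.} Here $D=-m$ with $m>0$ and the equation is the Pell-type equation $X^2-my^2=n$.
\begin{itemize}
  \item If $m=k^2$ is a perfect square, the equation factors over $\mathbb{Z}$ as $(X-ky)(X+ky)=n$. Since $n\ne0$, the factor pair is determined by a divisor of $n$, giving $\ll\tau(|n|)$ solutions.
  \item If $m$ is not a square, we work in the real quadratic field $\mathbb{Q}(\sqrt m)$. The number of principal ideals of norm $|n|$ is again $\le\tau(|n|)$.
\end{itemize}
In the non-square case each ideal corresponds to infinitely many elements, namely one generator times the powers $\eta^k$ of the fundamental unit (and the sign $\pm1$). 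We must therefore count how many of these land in the box $|X|\le|a|P$, $1\le y\le P$.

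An element $\xi=X+y\sqrt m$ in the box satisfies
\begin{equation*}
  |\xi|\le|a|P+P\sqrt{|ab|}\ll |ab|P ,
\end{equation*}
and its conjugate satisfies the same bound. Since $|\xi\bar\xi|=|n|\ge1$, this forces
\begin{equation*}
  \bigl|\log|\xi|\bigr|\le\log\bigl(C|ab|P\bigr).
\end{equation*}
Two elements of the same ideal class differ by $\eta^k$, and $\log\eta\ge\log\frac{1+\sqrt5}{2}$ is bounded below absolutely. So each ideal contributes $\ll\log(|ab|P)+1$ elements. The total is therefore
\begin{equation*}
  \ll\tau(|ac|)\log(|ab|P)\ll_\epsilon(|abc|P)^\epsilon .
\end{equation*}

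\textbf{Main obstacle.} The main difficulty is this indefinite case: it is the only place where $P$ genuinely enters the bound, through the logarithmic count of unit multiples. There is a secondary subtlety: the ideals come from the order $\mathbb{Z}[\sqrt{-D}]$ rather than from $\mathcal{O}_K$. We would handle this as follows. We map $\xi$ to the ideal $\xi\mathcal{O}_K$. Elements of $\mathbb{Z}[\sqrt{-D}]$ generating the same $\mathcal{O}_K$-ideal differ by a unit of $\mathcal{O}_K$. Every such unit is $\pm\eta_0^k$ with $\eta_0\ge\frac{1+\sqrt5}2$. Consequently the argument above, which only uses a lower bound on the size of the fundamental unit, goes through unchanged.
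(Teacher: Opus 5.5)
Your argument is correct. The paper gives no proof of this lemma and simply imports it as Lemma~3.5 of Vaughan--Wooley, so what you have written is a self-contained replacement for that citation rather than a variant of an argument in the paper.

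Your route is the standard one. You multiply by $a$ to reach $X^2+ab\,y^2=ac$. You then bound the number of ideals of norm $|ac|$ in $\mathbb{Q}(\sqrt{-ab})$ by $\tau(|ac|)$. In the indefinite non-square case, you combine $|\xi|,|\bar\xi|\ll|ab|P$ with $|\xi\bar\xi|\ge1$ and $\eta\ge(1+\sqrt5)/2$, which allows only $O\bigl(1+\log(|ab|P)\bigr)$ unit multiples per ideal. All the steps check out, including the split case via $(X-ky)(X+ky)=ac$ and the passage from $\mathbb{Z}[\sqrt{\pm m}]$ to $\mathcal{O}_K$.

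What your version buys over the bare citation is the sharper and more transparent bound $\ll\tau(|ac|)\bigl(1+\log(|ab|P)\bigr)$. It shows that $P$ enters only logarithmically, and only when $-ab$ is a positive non-square. That indefinite case is precisely the one the paper uses, since it applies the lemma to $j=h_1m_1^2-h_2m_2^2$, i.e.\ $ab=-h_1h_2<0$.

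One wording slip: where you write ``two elements of the same ideal class differ by $\eta^k$'', you mean two generators of the same principal ideal, and these differ by $\pm\eta^k$.
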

This is Lemma 3.5 of \cite{Vaughan-Wooley*95-1}.
Here is our next lemma, which is a degraded form of a specialisation to the diagonal case
of Theorem~1.1 of~\cite{Browning-Heath-Brown*18} by T.~Browning and
D.~R.~Heath-Brown. See also the papers \cite{Heath-Brown*96}, \cite{Browning*03}
and \cite{Browning*07}.
\begin{lem}
  \label{Browning-HB}
  Let $\epsilon>0$. Let us define
  \begin{equation*}
    Q((m_i)_{i\le 4},(h_i)_{i\le 4})
    =h_1m_1^2+h_2m_2^2-h_3m_3^2+h_4m_4^2
  \end{equation*}
  where $h_1,h_2,h_3,h_4>0$. Set $\Delta_Q=h_1h_2h_3h_4$ and
  $\|Q\|=\max(h_1,h_2,h_3,h_4)$. Set further
  \begin{equation*}
    \Delta_{\text{bad}}=\prod_{\substack{p^e\|\Delta_Q\\ e\ge 2}}p^e.
  \end{equation*}
  For $P\ge1$, the number $N(h_1,h_2,h_3,h_4,P)$ of solutions to
  we have $h_1m_1^2+h_2m_2^2=h_3m_3^2+h_4m_4^2$ that are such that
  $(m_1,m_2,m_3,m_4)\in[-P,P]^4$, $(m_1,m_2,m_3,m_4)=1$ satisfies
  \begin{equation*}
    N(h_1,h_2,h_3,h_4,P)
    \ll_\epsilon \Delta_{\text{bad}}^{1/4}
    \biggl(\frac{\|Q\|^4}{\Delta_Q}\biggr)^{5/8}
    \biggl(\frac{P^2}{\Delta_Q^{1/4}}+P^{4/3}\biggr)
    \log(2P)\log(2\Delta_Q)\Delta_{\text{bad}}^\epsilon
  \end{equation*}
  provided that $\Delta_{\text{bad}}^{20}\le P$.
\end{lem}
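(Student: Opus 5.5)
The plan is to read the lemma off Theorem~1.1 of~\cite{Browning-Heath-Brown*18}, applied to the single quaternary form
\begin{equation*}
  F(x_1,x_2,x_3,x_4)=h_1x_1^2+h_2x_2^2-h_3x_3^2-h_4x_4^2 ,
\end{equation*}
so that $N(h_1,h_2,h_3,h_4,P)$ is exactly the number of primitive zeros of $F$ in the box $[-P,P]^4$. The proof is then a dictionary between their invariants and ours, followed by a few coarsenings.

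\textbf{Step 1: hypotheses and invariants.} Since all $h_i>0$, the matrix of $F$ is $\mathrm{diag}(h_1,h_2,-h_3,-h_4)$. It is nonsingular, and its determinant is $h_1h_2h_3h_4=\Delta_Q$ up to sign. The height of $F$ (the maximum modulus of its coefficients) is $\|Q\|$. For a diagonal form, the part of the determinant built from primes dividing it to exponent at least~$2$ is precisely $\Delta_{\text{bad}}$ as defined in the statement. If Browning and Heath-Brown normalise the determinant with an extra power of~$2$ (a factor $2^4$ from writing $F$ via $2\times$ its Gram matrix), that only changes $\Delta_Q$, $\|Q\|$ and $\Delta_{\text{bad}}$ by absolute constants, which the $\ll$ absorbs. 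Their side condition relating the bad part of the determinant to~$P$ is then what the assumption $\Delta_{\text{bad}}^{20}\le P$ is designed to guarantee.

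\textbf{Step 2: specialise and degrade.} Their bound has the shape
\begin{equation*}
  \Delta_{\text{bad}}^{1/4+\epsilon}\Bigl(\frac{\|Q\|^4}{\Delta_Q}\Bigr)^{5/8}\bigl(P^2\Delta_Q^{-1/4}+P^{4/3}\bigr)\times(\text{logarithms}),
\end{equation*}
possibly with additional terms. I would handle the discrepancies as follows.
\begin{itemize}
  \item Where their logarithmic factor involves $\log\|Q\|$, replace it by $\log(2\Delta_Q)$. This is legitimate because $\|Q\|\le\Delta_Q$, as all $h_i\ge1$.
  \item If $\Delta_Q$ is a square, their statement may carry a larger main term, of size $P^2$ divided by a power of $\Delta_Q$. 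This is the point I expect to be the main obstacle, and the phrase ``degraded form'' suggests it is handled by weakening. I would bound this term by $P^2\Delta_Q^{-1/4}$ times the factor $(\|Q\|^4/\Delta_Q)^{5/8}\ge1$, using that each $h_i\le\|Q\|$ gives $\|Q\|^4\ge\Delta_Q$. If the exponent of $\Delta_Q$ in their square-case term is weaker than $1/4$, I would instead borrow from the factor $\Delta_{\text{bad}}^{1/4}$, since when $\Delta_Q$ is a square every prime divides it to an even power, so $\Delta_{\text{bad}}=\Delta_Q$.
  \item Any lower order term (for instance a $P^{3/2}$ or $P^{1+\epsilon}$ contribution) is dominated by $P^{4/3}$ or by $P^2\Delta_Q^{-1/4}$ in the stated range.
\end{itemize}

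\textbf{Step 3: conclude.} Collecting the pieces gives the displayed bound. The whole difficulty is thus bookkeeping: matching their definitions of height, determinant and bad part, and checking that each term of their theorem is majorised by the right-hand side claimed here. Step~2's square-discriminant case is the delicate point.
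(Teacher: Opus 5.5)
Your plan matches the paper, which gives no argument beyond saying that this lemma is a degraded specialisation to the diagonal form $h_1x_1^2+h_2x_2^2-h_3x_3^2-h_4x_4^2$ of Theorem~1.1 of \cite{Browning-Heath-Brown*18}; the only content is the matching of determinant, height and $\Delta_{\text{bad}}$ that you describe. One correction to your third bullet: a term of size $P^{3/2}$ (with the same coefficient) is \emph{not} dominated by $P^{4/3}+P^{2}\Delta_Q^{-1/4}$ unless $P\gg\Delta_Q^{1/2}$, and the hypothesis $\Delta_{\text{bad}}^{20}\le P$ does not ensure this (it is vacuous when $\Delta_Q$ is squarefree), so the argument relies on the cited theorem already having the $P^{4/3}$ shape.
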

In the application we have in mind, the variables $h_i$ are all of a
same size~$H$. This is the topic of~\cite{Browning-Heath-Brown*20} by
T.~Browning and D.~R.~Heath-Brown, though with the size condition
$\max(h_i)^3\max(m_i)^2\le B$, which is not adapted to our
situation. However, the upper bound provided for $M_3$ in Lemma~2.1
of~\cite{Browning-Heath-Brown*20} contains the material we need.
\begin{lem}
  \label{Browning-HB2}
  Let $\epsilon>0$. 
  For $P\ge1$, the number $M_3(H,P)$ of solutions to
  we have $h_1m_1^2+h_2m_2^2=h_3m_3^2+h_4m_4^2$ that are such that
  $(m_1,m_2,m_3,m_4)\in[-P,P]^4$, $(h_1,h_2,h_3,h_4)\in[-H,H]^4$,
  $(m_1,m_2,m_3,m_4)=1$ satisfies 
  \begin{equation*}
    M_3(H,P)
    \ll_\epsilon
    H^3P^2+H^5P^{2/3}+(HP)^{2+\epsilon}.
  \end{equation*}
\end{lem}
The only supplementary step now is to remove the coprimality
condition, but this is readily done at a cost of a multiplicative constant.

\section{Lemmas for linear sums}

Let us start by part of Lemma~2 of \cite{Ghosh*81} by A.~Ghosh,
Eq. (3.10) in case~$H=1$. Let the parameters $V,W\leq N$ and $V'= \min \{ 2V, N\}$ and $W'=\min \{2W,N\}$. 
\begin{lem}
  \label{LinearGhosh}
  Assume that $|q\alpha-a|\le 1/q$ for some $a$ prime to~$q$.  Then,
  for any complex sequence $(a_m)$ bounded in absolute value by~1, we
  have
  \begin{equation*}
    \sum_{V< m\le V'}a_m
    \sum_{\substack{W<n\le W'\\ N/2 < mn\le N}}
    e(\alpha (mn)^2)
    \ll_\ve (Nq)^\ve
    \biggl(
    \frac{NV^{1/2}}{q^{1/2}}+N^{1/2}V+\sqrt{Vq}
    \biggr).
  \end{equation*}
\end{lem}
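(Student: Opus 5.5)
We treat the type I sum by fixing $m$ and studying the inner sum over $n$, which is a Weyl sum of degree two in $n$ with leading coefficient $\alpha m^2$. To keep this phase uniform we first remove the hyperbolic condition $N/2<mn\le N$. Since $m$ is fixed and $n$ runs over an integer interval, this condition simply restricts $n$ to the interval $\bigl(\max(W,N/(2m)),\min(W',N/m)\bigr]$. So no Fourier device is needed; Lemma~\ref{FourierVaughan} could be used instead at the cost of a logarithm. Thus, for each $m\in(V,V']$, the inner sum is $\sum_{n\in I_m}e(\alpha m^2 n^2)$ with $I_m$ an interval of length at most $W\ll N/V$.

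Next we apply Weyl differencing, that is, Cauchy–Schwarz in the $m$ variable. Squaring the inner sum and writing $n_1=n+h$ gives
\begin{equation*}
\Bigl|\sum_{n\in I_m}e(\alpha m^2n^2)\Bigr|^2\le W+2\sum_{1\le h\le W}\Bigl|\sum_{n\in I_m(h)}e(2\alpha m^2 hn)\Bigr|\ll W+\sum_{h\le W}\min\bigl(W,\|2\alpha m^2h\|^{-1}\bigr).
\end{equation*}
By Cauchy–Schwarz over $m$, the whole sum is at most
\begin{equation*}
V^{1/2}\Bigl(VW+\sum_{V<m\le V'}\sum_{h\le W}\min\bigl(W,\|2\alpha hm^2\|^{-1}\bigr)\Bigr)^{1/2}.
\end{equation*}

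We then regroup the double sum according to $r=2hm^2\le 8WV^2\ll NV$. The number of representations of $r$ in this form is at most $\tau(r)\ll (NV)^\ve$, since $m^2\mid r$ determines $h$. This bounds the double sum by $(Nq)^\ve\sum_{r\ll NV}\min\bigl(NV/r,\|\alpha r\|^{-1}\bigr)$, using $W\le 2N/V$ and $W=\min(W,\cdot)\le NV\cdot V^{-2}$. Here lies the main obstacle: a naive application of Lemma~\ref{Vino} with $X=NV$ and $Y=W$ produces $NVW/q+(NV+q)$, which gives the bound $V^{1/2}(N/q+N+q)^{1/2}$. This is too weak in its middle term $NV$.

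To overcome this, we first split $h$ dyadically as $h\sim K$ and $r$ accordingly, and we use the sharper estimate $\min(W,\|x\|^{-1})$. We apply Lemma~\ref{Vino} in the form
\begin{equation*}
\sum_{r\le X}\min\bigl(Y,\|\alpha r\|^{-1}\bigr)\ll XY/q+(X+q)\log 2q
\end{equation*}
with $X\asymp KV^2$ and $Y=W$. Summing over dyadic $K\le W$ gives
\begin{equation*}
V^{2}W^2/q+V^2W+q\ll N^2/q+NV+q
\end{equation*}
up to $(Nq)^\ve$ factors. Inserting this into the Cauchy–Schwarz bound yields
\begin{equation*}
V^{1/2}\bigl(VW+N^2/q+NV+q\bigr)^{1/2}\ll (Nq)^\ve\bigl(NV^{1/2}q^{-1/2}+N^{1/2}V+(Vq)^{1/2}\bigr),
\end{equation*}
using $VW\ll N$. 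The term $N^{1/2}V$ arises from $V^{1/2}(NV)^{1/2}$, so the naive count is in fact what is needed. The only genuine points to check are the divisor-bound multiplicity of the representation $r=2hm^2$ and the bookkeeping of $W\asymp N/V$, which together produce exactly the three terms claimed in Lemma~\ref{LinearGhosh}.
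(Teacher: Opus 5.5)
Your argument is correct and follows the paper's proof. You apply Cauchy--Schwarz over $m$ (for fixed $m$ the condition $N/2<mn\le N$ is just an interval in $n$), then Weyl differencing in $n$, then merge $2hm^2$ into one variable $r\le 8WV^2\ll NV$ at a divisor-bound cost, and finish with Lemma~\ref{Vino} with $X\asymp WV^2$, $Y=W$.

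Do tidy the middle paragraph, though. The direct application already gives $V^{1/2}(N^2/q+NV+q)^{1/2}$ up to $(Nq)^{\ve}$; your $V^{1/2}(N/q+N+q)^{1/2}$ is a miscomputation, so nothing is ``too weak'' and the dyadic split in $h$ is superfluous. Also, replacing $\min(W,\cdot)$ by $\min(NV/r,\cdot)$ is unjustified, since it would need $hm^2\ll NV/W\asymp V^2$; you never actually use it, so it should simply be deleted.
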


\begin{proof}
  Let us denote by $\Sigma$ our quantity.
  We use Cauchy's inequality to get
  \begin{align*}
    |\Sigma|^2
    &\ll
    V
    \sum_{V<m\le V'}\biggl|\sum_{\substack{W<n\le W'\\ N/2 < mn\le N}}
    e(\alpha (mn)^2)\biggr|^2
    \\&\ll
    V
    \biggl(
    W + \sum_{\substack{0\le |k|\le W'\\ V<m\le V'}}\biggl|
    \sum_{\substack{W<n\le W'\\ N/2 < mn\le N\\W<n+k\le W'\\ N/2 < m(n+k)\le N}}
    e(\alpha m^2(-2kn+k^2))\biggr|
    \biggr).
  \end{align*}
  This is obtained by expanding the squares, resulting in two
  variables $n_1$ and $n_2$, setting $k=n_1-n_2$ and summing over $k$
  and $n=n_1$. This is very much the process used when proving the
  Weyl-van der Corput inequality, see for instance~Lemma~2.5 of the
  book \cite{Graham-Kolesnik*91} by S.~W.~Graham \& G.~Kolesnik.
  This readily reduces to
  \begin{equation*}
    |\Sigma|^2
    \ll N
     + V\sum_{\substack{k\le W'\\ V<m\le V'}}\biggl|
    \sum_{\substack{n\in I(m,k)}}
    e(\alpha km^2n)\biggr|
  \end{equation*}
  for some interval $I(m,k)$ of length $\le W$. Therefore
  \begin{equation*}
    |\Sigma|^2
    \ll N
     + V\sum_{\substack{k\le W'\\ V<m\le V'}}\min(W, 1/\|km^2\alpha\|)
    .
  \end{equation*}
  A first possibility is to extend the variable $km^2$ to a variable
  $\ell\le W'V^2$ and to use Lemma~\ref{Vino}.
  Following this path, the reader will swiftly complete the proof.
\end{proof}

\begin{lem}
    \label{linear+congruence}
    Assume that $|q\alpha-a|\le 1/q$ for some $a$ prime to~$q$. Then,
  for any complex sequence $(a_m)$ bounded in absolute value by~1, we
  have
  \begin{equation*}
    \sum_{V< m\le V'}a_m
    \sum_{\substack{W<n\le W'\\ N/2 < mn\le N \\ mn \equiv 1 [4]}}
    e(\alpha (mn)^2)
    \ll_\ve (Nq)^\ve
    \biggl(
    \frac{NV^{1/2}}{q^{1/2}}+N^{1/2}V+\sqrt{Vq}
    \biggr).
  \end{equation*}
\end{lem}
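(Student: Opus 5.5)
We reduce Lemma~\ref{linear+congruence} to Lemma~\ref{LinearGhosh}, or more exactly to its proof, by detecting the condition $mn\equiv 1[4]$. Only odd $m$ and $n$ contribute, so first insert $a_m\1_{2\nmid m}$ in place of $a_m$. For odd $m$, the condition $mn\equiv1[4]$ is equivalent to $n\equiv \bar m[4]$, that is $n\equiv m[4]$. We therefore split the $m$-sum according to the class $r\in\{1,3\}$ of $m$ modulo~$4$. This leaves, for each $r$, the sum
\begin{equation*}
  \sum_{\substack{V<m\le V'\\ m\equiv r[4]}}a_m
  \sum_{\substack{W<n\le W'\\ N/2<mn\le N\\ n\equiv r[4]}}e(\alpha(mn)^2),
\end{equation*}
in which the coefficient $a_m\1_{m\equiv r[4]}$ is still bounded by~$1$. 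It remains to bound the same bilinear sum with $n$ restricted to one residue class modulo~$4$.

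We handle this restriction by repeating the proof of Lemma~\ref{LinearGhosh} with $n$ restricted to $n\equiv r[4]$. First apply Cauchy's inequality over~$m$ and expand the square, writing $n_1=n$ and $n_2=n+k$. Both variables lie in the class $r$, so $k=4k'$ with $|k'|\le W'/4$. The phase becomes $e(\alpha m^2(8k'n+16k'^2))$, where the sign is irrelevant after taking absolute values. Next write $n=r+4j$, with $j$ running over an interval of length $\le W/4$. The inner sum is then a geometric sum in $j$ with ratio $e(32\alpha k'm^2)$. This gives
\begin{equation*}
  |\Sigma_r|^2\ll N+V\sum_{\substack{k'\le W'\\ V<m\le V'}}\min\bigl(W,1/\|32k'm^2\alpha\|\bigr).
\end{equation*}

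The main point is the final application of Lemma~\ref{Vino}. As in the proof of Lemma~\ref{LinearGhosh}, we group $\ell=32k'm^2\le 32W'V'^2$. The number of representations of a given $\ell$ in this form is $\ll\tau(\ell)\ll(Nq)^\ve$. We also have $VW\ll N$. Lemma~\ref{Vino} applied with $X\asymp WV^2$ and $Y=W$ then gives the bound
\begin{equation*}
  (Nq)^\ve\Bigl(\frac{W^2V^2}{q}+WV^2+q\Bigr)\log 2q.
\end{equation*}
Multiplying by $V$ and using $VW\ll N$ yields $|\Sigma_r|^2\ll(Nq)^\ve(N^2V/q+NV^2+Vq)$. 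Taking square roots and summing over the two classes $r\in\{1,3\}$ gives the stated bound. Note that we apply Lemma~\ref{Vino} to all $\ell\le 32WV^2$, not only to multiples of~$32$; this only enlarges a sum of nonnegative terms, so it is harmless, and the factor~$32$ changes only constants.

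An alternative would be to write $\1_{n\equiv r[4]}=\frac14\sum_{b\bmod 4}e(b(n-r)/4)$. However, the additional linear phase $e(bn/4)$ cannot be absorbed into $e(\alpha(mn)^2)$ directly, so the direct repetition of the proof described above is simpler. The only genuine issue is checking that the squaring step of Lemma~\ref{LinearGhosh} survives the restriction $n\equiv r[4]$, and the change of variables $n=r+4j$, $k=4k'$ shows that it does.
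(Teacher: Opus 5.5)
Your proof is correct. It differs from the paper's only in how the condition $mn\equiv 1[4]$ is removed.

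The paper detects the condition with the characters modulo~$4$. It writes the character in $n$ through the additive exponentials $e(n/2)$, $e(n/4)$, $e(3n/4)$, which leads to twisted sums $\sum_m A_m\sum_n e(\alpha(mn)^2+\beta n)$. It then redoes Cauchy's inequality and the differencing. The twist becomes $e(\beta(n_1-n_2))=e(\beta k)$, which does not depend on $n$ and disappears once the inner sum is put in absolute value. What remains is $\min(W,1/\|\alpha m^2k\|)$, as in Lemma~\ref{LinearGhosh}. So the alternative you dismiss in your last paragraph is exactly the paper's route, and it is no harder. The linear phase cannot be fed into Lemma~\ref{LinearGhosh} as a black box, but it is invisible after differencing.

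Your route splits $m$ by its class $r$, which forces $n\equiv r[4]$, and parametrises $n=r+4j$, $k=4k'$. This avoids twisted sums altogether. The only cost is a factor $32$ inside $\|\cdot\|$, which you correctly absorb by enlarging to all $\ell\le 32WV^2$ with multiplicity at most $\tau(\ell)$.

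What each approach buys:
\begin{itemize}
\item The paper's version extends unchanged to any congruence condition on $n$, or to any linear twist.
\item Yours is more transparent. It also sidesteps the paper's displayed identity, which as written multiplies $\bigl(\sum_\chi\chi(m)\bigr)$ by $(\chi_0(n)+\chi_1(n))$ instead of using $\frac12\sum_\chi\chi(m)\chi(n)$. That identity therefore loses the pairs $m\equiv n\equiv 3[4]$.
\end{itemize}

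One small slip, shared with the paper: the diagonal $k'=0$ contributes $V\cdot VW\ll NV$, not $N$. It must also be set aside before applying Lemma~\ref{Vino}, which only covers $\ell\ge 1$. It is absorbed by the term $NV^2$, so the final bound is unaffected.
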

\begin{proof}
 We use the orthogonality property of Dirichlet characters to handle the congruent condition, and to represent the principal and non principal Dirichlet characters modulo~4 we use $(1-e(x/2))/2$ and $(e(x/4)-e(3x/4))/2i$. 
    \begin{align}
    \label{toAdditive}
       \sum_{V< m\le V'}a_m\sum_{\substack{W<n\le W'\\ N/2 < mn\le N \\ mn \equiv 1 [4]}}
    &e(\alpha (mn)^2)
    = 
    \sum_{V< m\le V'}a_m \biggl(\sum_{\chi \mod{4}} \chi(m)\biggr)
    \notag\\
        &\times \sum_{\substack{W<n\le W'\\ N/2 < mn\le N }}
    e(\alpha (mn)^2) \biggl[ \frac{1-e(n/2)}{2}+
    \frac{e(n/4)-e(3n/4)}{2i}\biggr].
    \end{align}
    Here we have two types of sum
    \begin{equation*}
        \sum_{V< m\le V'}A_m
        \sum_{\substack{W<n\le W'\\ N/2 < mn\le N }}
        e(\alpha (mn)^2)
    \end{equation*}
    and
    \begin{equation*}
         \sum_{V< m\le V'}A_m
        \sum_{\substack{W<n\le W'\\ N/2 < mn\le N }}
        e(\alpha (mn)^2+\beta n)
    \end{equation*} 
    where $A_m$ is bounded in absolute by 1. We have a bound  for the first by
    Lemma~\ref{LinearGhosh}.  Let us denote the second by $\Sigma$.
    By applying Cauchy on $\Sigma$, we have
    \begin{align*}
        \Sigma 
        \ll& 
        \bigg( \sum_{V<m\leq V'}1^2\bigg)^{1/2}\bigg[ \sum_{V<m\leq V'}\bigg| \sum_{N/4V<n\leq N/V}e(\alpha(mn)^{2}+\beta n)\bigg|^{2}\bigg]^{1/2}\\
        \ll& 
        V^{1/2} \bigg[ N+\sum_{V<m\leq V'} \sum_{\substack{N/4V<n_{1}\leq N/V\\ N/4V<n_{2}\leq N/V\\ n_{1}\neq n_{2}}} e(\alpha(n_{1}^2-n_{2}^2)m^2+\beta (n_{1}-n_{2}) \bigg]^{1/2}\\
        \ll& 
        V^{1/2}\bigg[ N+\sum_{V<m\leq V'} \sum_{0<|k|\leq N/V} e(\beta k)\sum_{ N/4V<n\leq N/V}e(\alpha m^2k(k+2n)\bigg]^{1/2}.
    \end{align*}
    This is obtained by putting $k=n_{1}-n_{2}$ and $n=n_{2}$. Here the inner sum is over all $n \in I(m,k)$ where $I(m,k)$ is an interval of length $3N/4V$. This gives,
    \begin{align*}
        \sum \ll& V^{1/2} \bigg[ N+ \sum_{\substack{0<|k|\leq N/V\\ V<m\leq V'}}\min (3N/4V , ||\alpha m^2 k||^{-1})\bigg]^{1/2}.
    \end{align*}
    Putting $m^2k=\ell \leq NV$ and using Lemma \ref{Vino}, we have
    \begin{align*}
        \sum \ll N^{1/2}V+NV^{1/2}q^{-1/2}+(Vq)^{1/2}.
    \end{align*}
    This proves the Lemma.
\end{proof}


We shall also require a version averaged over multiples of $\alpha$.
Here is a consequence of part of Lemma~2 of \cite{Ghosh*81} by A.~Ghosh,
Eq.~(3.10).

\begin{lem}
  \label{HLinearGhosh}
  Assume that $|q\alpha-a|\le 1/q$ for some $a$ prime to~$q$. Then,
  for any complex sequence $(a_m)$ bounded in absolute value by~1, we have
  \begin{equation*}
    \sum_{h\le H}
    \biggl|\sum_{V< m\le V'}a_m
    \sum_{\substack{W<n\le W'\\ N/2 < mn\le N}}
    e(h\alpha (mn)^2)\biggr|
    \ll_\ve (Nq)^\ve
    \bigl( HNV^{1/2}q^{-1/2}+
    HN^{1/2}V+(HVq)^{1/2}
    \bigr).
  \end{equation*}
\end{lem}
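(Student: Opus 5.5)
We follow the proof of Lemma~\ref{LinearGhosh}, carrying the extra variable~$h$ along. Write $\Sigma_h$ for the inner double sum with $h\alpha$ in place of $\alpha$, and introduce unimodular coefficients $c_h$ with $|\Sigma_h|=c_h\Sigma_h$. By Cauchy's inequality,
\begin{equation*}
  \biggl(\sum_{h\le H}|\Sigma_h|\biggr)^2
  \le H\sum_{h\le H}|\Sigma_h|^2 .
\end{equation*}
For each fixed~$h$ we do exactly what was done in the proof of Lemma~\ref{LinearGhosh}: Cauchy over~$m$, expansion of the square in $n$, and the substitution $k=n_1-n_2$. This gives
\begin{equation*}
  |\Sigma_h|^2 \ll N + V\sum_{\substack{k\le W'\\ V<m\le V'}}\min\bigl(W,1/\|hkm^2\alpha\|\bigr).
\end{equation*}
Summing over~$h$, we get
\begin{equation*}
  \biggl(\sum_{h\le H}|\Sigma_h|\biggr)^2
  \ll H^2N+HV\sum_{\substack{h\le H,\ k\le W'\\ V<m\le V'}}\min\bigl(W,1/\|hkm^2\alpha\|\bigr).
\end{equation*}

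The key step is to merge the variables into a single variable $\ell=hkm^2\le HW'V'^2\ll HNV$. The number of representations of $\ell$ in this form is at most $\tau_3(\ell)\ll (HNV)^\ve\ll (Nq)^{2\ve}$; this uses $H\le N$, and we may assume it since otherwise the trivial bound $HN$ is already smaller than the target. We then apply Lemma~\ref{Vino} with $X\asymp HNV$ and $Y=W\asymp N/V$:
\begin{equation*}
  \sum_{\ell\le X}\min(Y,1/\|\ell\alpha\|)\ll \frac{HN^2}{q}+(HNV+q)\log 2q.
\end{equation*}
Multiplying by $HV$ and adding $H^2N$ gives
\begin{equation*}
  \biggl(\sum_{h\le H}|\Sigma_h|\biggr)^2
  \ll (Nq)^{\ve}\Bigl(H^2N+\frac{H^2N^2V}{q}+H^2NV^2+HVq\Bigr).
\end{equation*}
Taking square roots yields $HNV^{1/2}q^{-1/2}+HN^{1/2}V+(HVq)^{1/2}$, and the term $HN^{1/2}$ is absorbed by $HN^{1/2}V$ because $V\ge1$. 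This is the claimed bound.

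The only delicate points are bookkeeping. The diagonal term must come out as $HN$ per~$h$ rather than something worse; it does, since $VW\asymp N$. Also, the divisor-type bound must remain in $(Nq)^\ve$, which is where the reduction to $H\le N$ is used. Everything else is a direct transcription of the proof of Lemma~\ref{LinearGhosh}.
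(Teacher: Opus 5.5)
Your main chain of steps is sound: Cauchy over $h$, then the Weyl differencing of Lemma~\ref{LinearGhosh} for each fixed $h$, then merging $\ell=hkm^2$ with multiplicity at most $\tau_3(\ell)$, then Lemma~\ref{Vino} with $X\asymp HNV$ and $Y=W$. This is the $H$-averaged form of the paper's proof of Lemma~\ref{LinearGhosh}; the paper gives no separate proof of the present lemma and defers to Ghosh's (3.10).

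Two slips are harmless. First, after Cauchy over $m$ the diagonal is $V\sum_m\#\{n\}\asymp VN$ per $h$, not $N$. Your remark that $VW\asymp N$ makes it $N$ overlooks the factor $V$ coming from Cauchy's inequality, and the paper's $H=1$ proof has the same slip. Since $H^2VN\le H^2NV^2$, it is absorbed in $HN^{1/2}V$. Second, the frequency is really $2hkm^2\alpha$, which only changes constants.

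The step that fails is the reduction to $H\le N$. You claim that for $H>N$ the trivial bound $HN$ is already below the target. But the target can be as small as $(Nq)^{\epsilon}HN^{1/2}$ (take $V\asymp 1$, $q\asymp N$), which is below $HN$ whatever $H$ is, so the trivial bound never does this job.

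Your merging argument needs no reduction at all when $H\le (Nq)^{C}$ for a fixed $C$, since then $\tau_3(\ell)\ll_\epsilon (HNV)^{\epsilon}\ll (Nq)^{(C+2)\epsilon}$. For $H$ super-polynomial in $Nq$, however, the multiplicity of $\ell=hkm^2$ is no longer $(Nq)^{O(\epsilon)}$, and the argument does not give the uniformity claimed.

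A correct patch for $H\ge N$ is to sum over $h$ first. Write $\Sigma_h=\sum_{N/2<j\le N}c_j e(h\alpha j^2)$ with $|c_j|\le\tau(j)$, so that
\begin{equation*}
  \sum_{h\le H}|\Sigma_h|^2
  \le\sum_{j,j'}\tau(j)\tau(j')\min\bigl(H,1/\|\alpha(j^2-j'^2)\|\bigr)
  \ll (Nq)^{\epsilon}\Bigl(HN+\frac{HN^2}{q}+N^2+q\Bigr).
\end{equation*}
Here the diagonal $j=j'$ gives $HN^{1+\epsilon}$. For the rest, note that $g=j^2-j'^2\neq 0$ with $|g|\le N^2$ has at most $\tau(|g|)$ representations, and then apply Lemma~\ref{Vino}.

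Cauchy over $h$ then gives
\begin{equation*}
  \sum_{h\le H}|\Sigma_h|\ll (Nq)^{\epsilon}\bigl(HN^{1/2}+HNq^{-1/2}+H^{1/2}N+(Hq)^{1/2}\bigr).
\end{equation*}
For $H\ge N$ every term is dominated by the target; the third one because $H^{1/2}N\le HN^{1/2}$. With this replacing your ``trivial bound'' sentence, the proof is complete.
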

Lemma~\ref{LinearGhosh} is in fact case $H=1$ of the above.
\begin{lem}
  \label{HLinear+congruence}
  Assume that $|q\alpha-a|\le 1/q$ for some $a$ prime to~$q$.Then,
  for any complex sequence $(a_m)$ bounded in absolute value by~1, we have
  \begin{equation*}
    \sum_{h\le H}
    \biggl|\sum_{V< m\le V'}a_m
    \sum_{\substack{W<n\le W'\\ N/2 < mn\le N\\ mn\equiv 1[4]}}
    e(h\alpha (mn)^2)\biggr|
    \ll_\ve (Nq)^\ve
    \bigl(HNV^{1/2}q^{-1/2}+
    HN^{1/2}V+(HVq)^{1/2}
    \bigr).
  \end{equation*}
\end{lem}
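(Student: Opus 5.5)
We derive Lemma~\ref{HLinear+congruence} from Lemma~\ref{HLinearGhosh}, in the same way Lemma~\ref{linear+congruence} was derived from Lemma~\ref{LinearGhosh}. For each fixed $h\le H$, the congruence condition $mn\equiv1[4]$ only matters when $m$ and $n$ are odd, and it then means $n\equiv m[4]$. We detect it with the characters modulo~4, writing the $n$-part additively as in~\eqref{toAdditive}. Inside the absolute value this splits the inner sum into a bounded number of sums of two shapes:
\begin{equation*}
  \sum_{V<m\le V'}A_m\sum_{\substack{W<n\le W'\\ N/2<mn\le N}} e(h\alpha(mn)^2)
  \quad\text{and}\quad
  \sum_{V<m\le V'}A_m\sum_{\substack{W<n\le W'\\ N/2<mn\le N}} e(h\alpha(mn)^2+\beta n),
\end{equation*}
with $|A_m|\le1$ and $\beta\in\{1/2,1/4,3/4\}$. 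Here $A_m$ is $a_m$ times a character value, and it does not depend on $h$. By the triangle inequality, the sum over $h$ of the absolute value is at most the sum of the corresponding sums over $h$ for each piece.

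The first shape is exactly Lemma~\ref{HLinearGhosh}, applied with $(A_m)$ in place of $(a_m)$. For the second shape, we repeat Ghosh's argument behind Lemma~\ref{HLinearGhosh} with the extra twist $e(\beta n)$. For each $h$, Cauchy--Schwarz over $m$ and the van der Corput differencing $k=n_1-n_2$ used in the proof of Lemma~\ref{linear+congruence} turn the twist into a factor $e(\beta k)$ that does not depend on $n$. We then take absolute values before the $n$-sum. The differenced phase is $h\alpha m^2k(k+2n)$, so summing over $n$ in an interval of length $\ll W$ gives
\begin{equation*}
  |\Sigma_h|^2\ll V\Bigl(N+\sum_{\substack{0<|k|\le W'\\ V<m\le V'}}\min\bigl(W,1/\|2hkm^2\alpha\|\bigr)\Bigr).
\end{equation*}
This is exactly the quantity that arises in the proof of Lemma~\ref{HLinearGhosh}, up to the harmless factor~2 in the argument of the norm. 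So the twist costs nothing, and the twisted sums obey the same bound.

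To finish directly, we apply Cauchy--Schwarz over $h$:
\begin{equation*}
  \sum_h|\Sigma_h|\le H^{1/2}\Bigl(\sum_h|\Sigma_h|^2\Bigr)^{1/2}.
\end{equation*}
We then collect $\ell=2hkm^2\le 4HNV$. The number of representations of $\ell$ in this form is $\ll(Nq)^\ve$ by the divisor bound, since $\ell\le N^{O(1)}$ whenever $H\le N$ (we may assume this, as otherwise the trivial bound suffices). Applying Lemma~\ref{Vino} with $X=HNV$ and $Y=W\asymp N/V$ gives
\begin{equation*}
  \sum_h|\Sigma_h|^2\ll (Nq)^\ve V\Bigl(HN+\frac{HN^2}{q}+HNV+q\Bigr).
\end{equation*}
Hence
\begin{equation*}
  \sum_h|\Sigma_h|\ll (Nq)^\ve\bigl(HNV^{1/2}q^{-1/2}+HN^{1/2}V+(HVq)^{1/2}\bigr),
\end{equation*}
where the term $HN^{1/2}V^{1/2}$ is absorbed into $HN^{1/2}V$. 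This is the claimed bound.

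The main obstacle is checking that the twist by $e(\beta n)$ really disappears after differencing, uniformly in $h$. The point is that after Cauchy--Schwarz in $m$ it becomes $e(\beta k)$, independent of $n$, and is discarded when we take absolute values. A second point to watch is that the divisor-type count for collecting $\ell=2hkm^2$ stays $\ll(Nq)^\ve$.
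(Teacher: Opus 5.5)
Your proposal is correct and follows the paper's route: you detect $mn\equiv1[4]$ as in Lemma~\ref{linear+congruence}, send the untwisted piece to Lemma~\ref{HLinearGhosh}, and treat the twisted pieces by the same differencing. You also make explicit what the paper leaves implicit, namely Cauchy--Schwarz over $h$ and then Lemma~\ref{Vino} on $\ell=2h|k|m^2$; this works equally for $\beta=0$ and so also reproves Lemma~\ref{HLinearGhosh}. One side remark is wrong, though harmless in context: $H>N$ is \emph{not} covered by the trivial bound (for bounded $V$, $q=N^{1/2}$ and $H=N^2$ the right-hand side is about $HN^{3/4+\ve}$, below the trivial $HN$), so the divisor bound really costs $(HNq)^\ve$, which is $(Nq)^{O(\ve)}$ only under the tacit assumption $H\le(Nq)^{O(1)}$ that the paper also makes, e.g.\ when it writes $d(h_1m_1^2)\ll N^\ve$ in the proof of Lemma~\ref{HBilinearGhosh}.
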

As Lemma~\ref{HLinearGhosh}, this is also a lemma which is analogous to the one of A. Ghosh but with a condition $mn \equiv 1[4]$. We treat the condition $mn\equiv 1[4]$ as in Lemma~\ref{linear+congruence} and then appeal to Lemma~\ref{HLinearGhosh}.

\section{Lemmas for bilinear sums}

 Let us have the parameters $V,V',W,W'$ such that $V,W \leq N$, and $V' \leq 2V$ and $W'\leq 2W$.

Here is a lemma which is analogous to Lemma~2 of \cite{Ghosh*81} by A.~Ghosh,
Eq.~(3.11). 
\begin{lem}
  \label{HBilinearGhosh}
  Assume that $|q\alpha-a|\le 1/q$ for some $a$ prime to~$q$. Then,
  for any complex sequences $(a_m)$ and $(b_n)$  bounded in absolute value by~1, we have
  \begin{align*}
    \sum_{H<h\le H'}
    \biggl|\sum_{V< m\le V'}a_m
    &\sum_{\substack{W<n\le W'\\ N/2 < mn\le N\\ mn \equiv 1 [4]}}b_n
    e(h\alpha (mn)^2)\biggr|
    \\\ll_\ve (Nq)^\ve 
    \min\biggl(
    &\frac{HN}{q^{1/4}}+\frac{HN}{W^{1/2}}+HN^{3/4}W^{1/4} +H^{3/4}N^{1/2}q^{1/4},
    \\&\frac{HN}{q^{1/4}}+H(NW)^{1/2}+\frac{HN}{W^{1/4}} +H^{3/4}N^{1/2}q^{1/4},
    \\&
    \biggl(
    \frac{HN}{q^{1/4}}
     +
     H^{3/4} (NW)^{1/2}
     +
     \frac{H N}{W^{1/4}}
     +
    H^{3/4} N^{1/2}q^{1/4}\biggr)
    \biggl(1+ \frac{H^{1/2}W^{1/3}}{N^{1/3}}\biggr)
    \biggr).
  \end{align*}
\end{lem}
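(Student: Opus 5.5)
We first remove the congruence condition exactly as in Lemma~\ref{linear+congruence}. Writing $\1_{mn\equiv1[4]}=\tfrac12\sum_{\chi\bmod 4}\chi(m)\chi(n)$ for odd $mn$ (even $mn$ never satisfy the condition, so we can simply restrict $a_m,b_n$ to odd indices), each term is the same bilinear sum with $a_m\chi(m)$ and $b_n\chi(n)$ in place of $a_m,b_n$. These coefficients are still bounded by~$1$. So it suffices to prove the three bounds without the congruence condition. We also write $|\cdot|=c_h(\cdot)$ with $|c_h|=1$, so that the left-hand side becomes $\Sigma=\sum_h c_h\sum_m a_m\sum_n b_n e(h\alpha m^2n^2)$. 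The condition $N/2<mn\le N$ is harmless: either we treat it by the monotone-interval argument already used in Lemma~\ref{LinearGhosh}, or we separate variables with Lemma~\ref{FourierVaughan} at a cost of $N^\epsilon$.

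\emph{First bound.} Apply Cauchy's inequality over the pair $(h,m)$:
\[
|\Sigma|^2\le HV\sum_{h,m}\Bigl|\sum_n b_n e(h\alpha m^2n^2)\Bigr|^2
=HV\sum_{n_1,n_2}b_{n_1}\overline{b_{n_2}}\sum_{h,m}e\bigl(h\alpha m^2(n_1^2-n_2^2)\bigr).
\]
The diagonal $n_1=n_2$ contributes $H^2V^2W\asymp H^2N^2/W$, hence the term $HN/W^{1/2}$. For the off-diagonal terms we set $j=n_1^2-n_2^2\ll W^2$. Each $j$ has $\ll N^\epsilon$ representations, which is exactly the remark in the introduction about differences of squares. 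So we must bound $\sum_{h,j}|\sum_m e(hj\alpha m^2)|$. We apply Cauchy once more and then Weyl differencing in $m$:
\[
\Bigl|\sum_m e(\theta m^2)\Bigr|^2\le V+\sum_{0<|r|\le V}\min\bigl(V,1/\|2r\theta\|\bigr).
\]
We then gather $\ell=2rhj\ll HVW^2$; its multiplicity is $\ll (Nq)^\epsilon$ by the divisor bound. Lemma~\ref{Vino} then gives a contribution of the shape $H^2W^2V(HVW^2/q+HVW^2+q)$ to the inner squared quantity. Taking fourth roots and using $VW\asymp N$ yields $HN q^{-1/4}+HN^{3/4}W^{1/4}+H^{3/4}N^{1/2}q^{1/4}$, which together with the diagonal term is the first option.

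\emph{Second bound.} We run the same scheme with the roles of $m$ and $n$ exchanged: Cauchy over $(h,n)$, differencing in $m$, then counting representations of $m_1^2-m_2^2$. This is valid because $|b_n|\le1$, and it gives $H(NW)^{1/2}+HN/W^{1/4}$ in place of the two middle terms.

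\emph{Third bound.} Here we keep $h$ inside the first Cauchy step, over $n$ only. Expanding the square produces coefficients indexed by $(h_1,m_1,h_2,m_2)$ and the phase $\alpha n^2(h_1m_1^2-h_2m_2^2)$. A second Cauchy step, over $(h_1,h_2,m_1,m_2)$ with an inner sum over $n$, leads either to Weyl differencing as before, or to the number of solutions of $h_1m_1^2+h_4m_4^2=h_3m_3^2+h_2m_2^2$ with $h_i\asymp H$ and $m_i\le P\asymp V$. This count is controlled by Lemma~\ref{Browning-HB2}, after removing the coprimality condition at the cost of a constant. In that lemma the term $H^3P^2$ reproduces the generic bound, while $H^5P^{2/3}$ is responsible for the extra factor $1+H^{1/2}W^{1/3}N^{-1/3}$, and $(HP)^{2+\epsilon}$ is absorbed. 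The gain over the first option is the improvement of $H$ to $H^{3/4}$ in the $(NW)^{1/2}$ term.

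The main obstacle is this third bound. We need to organise the two Cauchy steps so that the diagonal contribution is exactly a count of solutions of the quaternary form, and then balance the exponents against Lemma~\ref{Browning-HB2}. The first two bounds are routine once the multiplicity of $\ell=2rhj$ is controlled by the divisor bound.
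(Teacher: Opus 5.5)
Your handling of the congruence condition and of $N/2<mn\le N$ matches the paper's. Writing $|\cdot|=c_h(\cdot)$ before separating variables is in fact the right order. The paper does not prove the first two bounds; it attributes them to Ghosh with $W$ and $N/W$ exchanged. Your Cauchy-plus-Weyl arguments do give them, apart from three slips:
\begin{itemize}
\item the quantity whose fourth root you take should be $H^3V^2W^2\bigl(HV^2W^2/q+HVW^2+q\bigr)$;
\item in the second bound the square is opened in $m$ and the Weyl differencing is in $n$;
\item the $H^{3/4}$ gain of the third bound is relative to the second bound, not the first.
\end{itemize}

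The genuine gap is in the third bound, which is the only part the paper actually proves. After the first Cauchy over $n$, with $\ell=hm^2$ grouped as in the paper, the off-diagonal term is $W\sum_{j\ne0}R(j)S(j)$. Here $S(j)=\sum_{n\le 2W}e(\alpha n^2j)$ and $R(j)$ is the weighted number of tuples with $h_1m_1^2-h_2m_2^2=j$.

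A Cauchy step over the tuples $(h_1,h_2,m_1,m_2)$, as you propose, only yields $(\#\text{tuples})^{1/2}\asymp HV$ times $\bigl(\sum_j R_0(j)|S(j)|^2\bigr)^{1/2}$, with $R_0$ the unweighted count. The quaternary count never appears. Nor does it arise as a ``diagonal contribution'': forcing that by a further Cauchy over $n$ leaves an off-diagonal governed by representation numbers of an eight-variable form, which you cannot control.

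The missing step is to collect tuples by the value of $j$ and apply Cauchy in $j$:
\[
\sum_{j\neq0}|R(j)|\,|S(j)|\le\Bigl(\sum_j|R(j)|^2\Bigr)^{1/2}\Bigl(\sum_{0<|j|\le L'}|S(j)|^2\Bigr)^{1/2}.
\]
Now $\sum_j|R(j)|^2$ is exactly the number of solutions of $h_1m_1^2-h_2m_2^2=h_3m_3^2-h_4m_4^2$. Lemma~\ref{Browning-HB2} therefore bounds the first factor by $\ll N^\epsilon\bigl(H^{3/2}N/W+H^{5/2}(N/W)^{1/3}\bigr)$. This mean-square bound replaces the pointwise $R(j)\ll H^2N^\epsilon$ of Lemma~\ref{Vaughan-Wooley} summed over $\asymp HN^2/W^2$ values of $j$. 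It saves a factor $H$ in $\Sigma^2$, and that is where the $H^{3/4}$ comes from.

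The second factor is not handled by Weyl differencing. Open the square in $n$ and note that $g=n_1^2-n_2^2\neq0$ has $\ll W^\epsilon$ representations. Sum the geometric progression over $j$ first, then apply Lemma~\ref{Vino} to $\sum_{g\le 4W^2}\min(L,1/\|\alpha g\|)$ with $L\asymp HN^2/W^2$. This gives $HN^2/W+HN^2/q+W^2+q$. The diagonal $\ell_1=\ell_2$ contributes $W^2\sum_\ell|A_\ell|^2\ll N^\epsilon HNW$, where $A_\ell$ is the coefficient of $e(\alpha\ell n^2)$. Adding this and multiplying out yields
\[
\Sigma^2\ll(Nq)^\epsilon\Bigl(\frac{H^2N^2}{q^{1/2}}+\frac{H^2N^2}{W^{1/2}}+H^{3/2}NW+H^{3/2}Nq^{1/2}\Bigr)\Bigl(1+H\Bigl(\frac{W}{N}\Bigr)^{2/3}\Bigr),
\]
which is the third bound after taking square roots. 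Your proposal names the right ingredients, but it leaves this organisation open, and the organisation it does indicate would not produce the quaternary count.
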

The first two upper bounds are similar to the ones of Ghosh, on
exchanging $W$ and $N/W$. The third one is the one we prove
now. Before proving this lemma, let us state a simplification of it. 
\begin{lem}
    \label{HBilinear}
    When $W\ge V$, the second upper bound given in
    Lemma~\ref{HBilinearGhosh} is smaller than the first one.
    When $W\ge V$ and $H\le (N/W)^{2/3}$, the third one is the smallest. 
\end{lem}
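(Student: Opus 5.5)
The plan is a direct term-by-term comparison of the three expressions in Lemma~\ref{HBilinearGhosh}, up to absolute multiplicative constants. The only structural input is the relation between $V$ and $W$ forced by the summation conditions. If the sum is non-empty, there exist $m\in(V,V']$ and $n\in(W,W']$ with $N/2<mn\le N$. Since $V'\le 2V$ and $W'\le 2W$, this gives $VW<N$ and $4VW\ge N/2$, so $VW\asymp N$. Under the hypothesis $W\ge V$ we then get $W^2\ge VW\gg N$, that is $W\gg N^{1/2}$. We also have $W\le N$. (If the sum is empty there is nothing to prove.)

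\emph{Second versus first.} Both bounds share the terms $HNq^{-1/4}$ and $H^{3/4}N^{1/2}q^{1/4}$, so only the two middle terms need comparing. First,
\begin{equation*}
  \frac{H(NW)^{1/2}}{HN^{3/4}W^{1/4}}=\Bigl(\frac{W}{N}\Bigr)^{1/4}\le 1,
\end{equation*}
using only $W\le N$. Second,
\begin{equation*}
  \frac{HN/W^{1/4}}{HN^{3/4}W^{1/4}}=\frac{N^{1/4}}{W^{1/2}}\ll 1,
\end{equation*}
and this is exactly where $W\gg N^{1/2}$, hence $W\ge V$, is used. Thus every term of the second bound is $\ll$ a term of the first, so the second bound is at most a constant times the first.

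\emph{Third versus second.} Assume in addition $H\le (N/W)^{2/3}$. Then $H^{1/2}\le (N/W)^{1/3}$, so $H^{1/2}W^{1/3}/N^{1/3}\le 1$ and the factor $1+H^{1/2}W^{1/3}N^{-1/3}$ is at most $2$. The remaining bracket has the terms $HNq^{-1/4}$, $HNW^{-1/4}$ and $H^{3/4}N^{1/2}q^{1/4}$, which also appear in the second bound. Its last term $H^{3/4}(NW)^{1/2}$ is at most $H(NW)^{1/2}$ because $H\ge1$. Hence the third bound is $\ll$ the second, and by the previous step $\ll$ the first, so it is the smallest up to constants.

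The only point requiring care is extracting $W\gg N^{1/2}$ from $W\ge V$ via $VW\asymp N$; everything else is elementary. I would also state explicitly that ``smaller'' is meant up to absolute constants, which is all that matters inside the $\ll_\ve$ of Lemma~\ref{HBilinearGhosh}.
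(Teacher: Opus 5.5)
Your proof is correct and follows the paper's argument: a term-by-term comparison resting on $W\gg N^{1/2}$ (from $W\ge V$ and $VW\asymp N$) and $W\le N$, with the third-versus-second comparison immediate once the factor $1+H^{1/2}W^{1/3}N^{-1/3}$ is bounded by~$2$ and $H\ge 1$. You are slightly more explicit than the paper, which leaves implicit both the derivation of $VW\asymp N$ and the check that $HN/W^{1/4}\ll HN^{3/4}W^{1/4}$.
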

It seems difficult to go farther. When $q$ is very
large, and $H$ is large,
the first upper bound is the smallest.
\begin{proof}[Proof of Lemma~\ref{HBilinear}]
When $W\ge V$, we have 
$HN/W^{1/2}\ll HN^{3/4}W^{1/4}$ simply because $W\gg N^{1/2}$ which
simplifies the first bound.  And since $HN^{3/4}W^{1/4}\gg
H(NW)^{1/2}$, we conclude that the second bound is the smallest of the
first two. The last statement is obvious.
\end{proof}
\begin{proof}[Proof of Lemma~\ref{HBilinearGhosh}]
   As before, we use the orthogonality property of Dirichlet characters to overcome the congruent condition, leading to
\begin{align*}
    \sum_{\substack{V< m\le V'\\W<n\le W'\\ N/2 < mn\le N\\ mn \equiv 1[4]}} a_mb_n e(h\alpha (mn)^2) 
    &=\frac12
    \sum_{\substack{V< m\le V'\\W<n\le W'\\ N/2 < mn\le N}}\sum_{\chi \mod{4}} \chi(m)\chi(n) a_mb_n e(h\alpha m^2n^2)
    \\&
    =\frac12\sum_{\chi \mod{4}}
    \sum_{\substack{V< m\le V'\\W<n\le W'\\ N/2 < mn\le N}} 
    (\chi(m)a_m)(\chi(n) b_n) e(h\alpha m^2n^2)
    . 
\end{align*}
This enables us to remove the congruence condition at no cost.
The next preparation step is to remove the condition $N/2<mn\le N$ via
Lemma~\ref{FourierVaughan} which we use with the parameters
$x=\log([N]+\frac12)$ and $\beta=\log mn$, so that  
\begin{equation*}
    \min(|\beta-x|,|\beta-x/2|)\gg 1/N.
\end{equation*}
We use $T=N^2$ getting an error term or $\Ocal(1)$.
Consequently
\begin{multline*}
    \sum_{\substack{V< m\le V'\\W<n\le W'\\ N/2 < mn\le N}} 
    (\chi(m)a_m)(\chi(n) b_n) e(h\alpha m^2n^2)
    \\
    \ll (\log N) \max_{|t|\le N^2}
    \biggl|\sum_{\substack{V< m\le V'\\W<n\le W'}} 
    (\chi(m)a_m m^{it})(\chi(n) b_n n^{it}) e(h\alpha m^2n^2)\biggr|
    +\Ocal(1).
\end{multline*}
Let us proceed by fixing $\chi$ and $t$, setting
$\tilde{a}_m=\chi(m)a_m m^{it}$ and similarly $\tilde{b}_n=\chi(n)b_n
n^{it}$. 
We are to majorize
\begin{align*}
    \Sigma
    &=\sum_{H<h\le H'}\biggl|\sum_{\substack{W<n\le W'\\ V<m\le
        V'}}\tilde{b}_n
    \tilde{a}_m e(\alpha h(mn)^2)\biggr|
    =\sum_{H<h\le H'}c_h\sum_{\substack{W<n\le W'\\ V<m\le
        V'}}\tilde{b}_n
    \tilde{a}_m e(\alpha h(mn)^2)
    \\&=\sum_{n\le 2W}\tilde{b}_n
    \sum_{\substack{L < \ell \leq 8L }} A_\ell e(\alpha \ell n^2)
\end{align*}
say, on setting $L=HN^2/(4W^2)$ and with
\begin{equation}
    A_\ell=\sum_{\substack{hm^2=\ell\\ H<h\le H'}}c_h\tilde{a}_m.
\end{equation}
 Now, by Cauchy's inequality,
\begin{align*}
     \Sigma^2 \ll&  
     \sum_{n\leq2W}|\tilde{b}_n|^2
     \sum_{n\leq2W}\biggl|\sum_{L< \ell\leq L'}
     A_\ell e(\alpha \ell n^2)\biggr|^2 \\ 
    \ll&  
     W
    \sum_{n\leq2W}\sum_{\ell\le L'}
    |A_\ell|^2
    +W\sum_{\substack{0< |j|\leq L'}}
    \sum_{\substack{L<\ell_1,\ell_2\le L'\\ j=\ell_1-\ell_2}}A_{\ell_{1}}A_{\ell_{2}} \sum_{n\leq2W}e(\alpha n^2j).
    \end{align*}
    The number of representations, say $R(j;H)$, of $j$ in the form
    $\ell_1-\ell_2$ weighted by~$A_{\ell_1}$ and $A_{\ell_2}$ is the
    number of solutions to $j=h_1m_1^2-h_2m_2^2$, with obvious size
    conditions on the variables.
    Lemma~\ref{Vaughan-Wooley} tells us that $R(j;H)\ll
    H^2N^\epsilon$, and therefore
    \begin{equation}
      \label{eq:1}
      \Sigma^2 \ll
      W
    \sum_{n\leq2W}\sum_{\ell\le L'}
    |A_\ell|^2
    +WH^2N^\epsilon\sum_{\substack{0< |j|\leq L'}}
    \biggl| \sum_{n\leq2W}e(\alpha n^2j)\biggr|.
    \end{equation}
    We record this inequality but we shall proceed differently. It
    would be enough when~$H=1$.
    Indeed the present proof proceeds by using the Cauchy inequality on $j$.
    The quantity $\sum_{0<|j|\le L'}|R(j;H)|^2$
     counts the number of solutions to
     $h_1m_1^2-h_2^2m_2^2=h_3m_3^2-h_4m_4^2$.
     Lemma~\ref{Browning-HB2}
     gives us the bound
    \begin{equation}
        \label{Bound4}
        \sum_{0<|j|\le L'}|R(j;H)|^2
        \ll_\epsilon H^3(N/W)^{2+\epsilon}+ H^5 (N/W)^{2/3}.
    \end{equation}
    We moreover check that 
    \begin{equation*}
    \sum_{\ell\leq L'}
    |A_\ell|^2
    \le
    \sum_{\substack{H<h_1\le H'\\ V<m_1\le V'}}
    d(h_1m_1^2)
    \ll_\epsilon H V N^\epsilon
    \ll_\epsilon HN^{1+\epsilon}W^{-1}.
    \end{equation*}
    Let us introduce these elements in the main proof. Applying Cauchy
    on the inner sum with $j$, we reach 
    \begin{align*}
        \Sigma^2
        \ll&
        N^\epsilon W\biggl[ HN
        + \bigl(H^{3/2} NW^{-1}+H^{5/2}(NW^{-1})^{1/3}\bigr)
        \biggl( \sum_{j\leq L'}
        \biggl| \sum_{n\leq 2W}e(\alpha n^2 j)\biggr|^2 \biggr)^{1/2} 
         \biggr]
        \\  \ll &  
        N^\epsilon HNW
                  +
                  N^\epsilon\bigl(H^{3/2} N+H^{5/2}N^{1/3}W^{2/3}\bigr)
        \biggl(
        \sum_{j\leq L'}
        W
        +\sum_{j\leq L'}\sum_{1\le |g|\le 4W^2}
        \sum_{\substack{n_1,n_2\le W'\\g= n_{1}^2 - n_{2}^2}} e(\alpha gj) \bigg)^{1/2}
        \\\ll &
        N^\epsilon HNW
        +N^\epsilon H^2 N^2 W^{-1/2}+N^\epsilon H^{3}N^{4/3}W^{1/6}
        \\&\qquad+N^\epsilon \bigl(H^{3/2} N+H^{5/2}N^{1/3}W^{2/3}\bigr)
        \biggl( \sum_{g\leq W^2}\min (L,||\alpha g||^{-1})\biggr)^{1/2}.
    \end{align*}
    Vinogradov's Lemma \ref{Vino} gives us the bound
    \begin{align*}
      \biggl( \sum_{g\leq W^2}\min (L,||\alpha g||^{-1})\biggr)^{1/2}
      &\ll
      \bigl(W^2Lq^{-1}+(W^2+q)\log 2q\bigr)^{1/2}
      \\&\ll
      \frac{H^{1/2}N}{q^{1/2}}
      +(W+\sqrt{q})\log(2q).
    \end{align*}
    We thus get
   \begin{align*}
     \frac{\Sigma^2}{(Nq)^\epsilon}
     \ll_\epsilon &
       HNW
        +H^2 N^{2}W^{-1/2}+ H^{3}N^{4/3}W^{1/6}
        + \frac{H^2N^2}{q^{1/2}}+\frac{H^{3}N^{4/3}W^{2/3}}{q^{1/2}}
        \\&   + H^{3/2} NW+H^{5/2}N^{1/3}W^{5/3}
           + H^{3/2} N\sqrt{q}+H^{5/2}N^{1/3}W^{2/3}\sqrt{q}.
   \end{align*}
   We see directly that the summand $HNW$ is superfluous. We may rewrite the bound
   in the form
   \begin{equation*}
     \frac{\Sigma^2}{(Nq)^\epsilon}
     \ll_\epsilon 
     \biggl(\frac{H^2N^2}{q^{1/2}}
     +
     H^2 N^{2}W^{-1/2}
     +
     H^{3/2} NW
     +
     H^{3/2} N\sqrt{q}\biggr)(1+ H(W/N)^{2/3}).
   \end{equation*}
   Therefore
   \begin{equation*}
     \frac{\Sigma}{(Nq)^\epsilon}
     \ll_\epsilon 
     \biggl(\frac{HN}{q^{1/4}}
     +
     \frac{H N}{W^{1/4}}
     +
     H^{3/4} \sqrt{NW}
     +
     H^{3/4} N^{1/2}q^{1/4}\biggr)(1+ H^{1/2}(W/N)^{1/3}).
   \end{equation*}
   This proves the lemma.
\end{proof}
The case $H=1$ in Lemma~\ref{HBilinearGhosh} gives us the next lemma.
\begin{lem}
  \label{Bilinear1}
  Let $a$ be prime to~$q$. Then, when $W\ge V/2$ and 
  for any complex sequences $(a_m)$ and $(b_n)$ bounded in absolute value by~1, we have
  \begin{equation*}
    \sum_{\substack{V< m\le V'\\W<n\le W'\\ N/2 < mn\le N\\ mn\equiv 1[4]}} a_mb_n
    e(\alpha(mn)^2)\ll_\ve (Nq)^\ve
    \biggl(
    \frac{N}{q^{1/4}}
    +(NW)^{1/2}+\frac{N}{W^{1/4}}
    +N^{1/2}q^{1/4}
    \biggr).
  \end{equation*}
\end{lem}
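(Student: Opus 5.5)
We would obtain Lemma~\ref{Bilinear1} as the specialisation $H=1$, $H'=2$ of Lemma~\ref{HBilinearGhosh}. With these values the outer sum over $H<h\le H'$ contains only the term $h=2$. So we first explain how to return to $h=1$. The simplest route is to rerun the proof of Lemma~\ref{HBilinearGhosh} with the single value $h=1$, i.e.\ with $c_h$ supported on $h=1$. Every step (character orthogonality, removal of $N/2<mn\le N$ through Lemma~\ref{FourierVaughan}, Cauchy, counting of representations) is unchanged. Alternatively, one may read the statement of Lemma~\ref{HBilinearGhosh} with $H'$ arbitrary in $(H,2H]$ and the sum over $h$ allowed to be any subset of integers of size $\ll H$, which is what the proof actually uses. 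Either way, the left-hand side of Lemma~\ref{Bilinear1} is bounded by the minimum of the three expressions in Lemma~\ref{HBilinearGhosh} evaluated at $H=1$.

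Next we select the right expression. The second one at $H=1$ reads
\begin{equation*}
  \frac{N}{q^{1/4}}+(NW)^{1/2}+\frac{N}{W^{1/4}}+N^{1/2}q^{1/4},
\end{equation*}
which is exactly the claimed bound. Thus no optimisation is needed: it suffices to note that the minimum is at most this second term. The hypothesis $W\ge V/2$ plays the role of $W\ge V$ in Lemma~\ref{HBilinear}, up to constants. It guarantees $W\gg N^{1/2}$, so the second bound is the relevant one and in particular dominates nothing lost from the first.

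The only point requiring care is checking that the second bound of Lemma~\ref{HBilinearGhosh} really holds for the case at hand. According to the remark after that lemma, it is Ghosh's bound (3.11) with $W$ and $N/W$ exchanged. Should one prefer to avoid that citation, we would derive it from inequality~\eqref{eq:1} with $H=1$, where the representation count $R(j;1)\ll N^\epsilon$ comes from Lemma~\ref{Vaughan-Wooley}. We then apply Cauchy on the $j$-sum and expand $\sum_j|\sum_{n\le 2W}e(\alpha n^2j)|^2$ into a sum over $g=n_1^2-n_2^2$, noting that each $g\ne0$ has $\ll N^\epsilon$ representations. Vinogradov's Lemma~\ref{Vino} then gives
\begin{equation*}
  \Sigma^2\ll N^{\epsilon}\bigl(NW+N^2q^{-1/2}+N^2W^{-1/2}+N\sqrt q\bigr),
\end{equation*}
which is the third-bound computation with $H=1$, the factor $1+(W/N)^{2/3}$ being $\ll1$. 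Taking square roots gives the claim. The main obstacle is therefore only the bookkeeping of the $h$-range, and not any new estimate.
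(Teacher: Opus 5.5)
Your proposal is correct and follows the paper, which obtains Lemma~\ref{Bilinear1} simply as the case $H=1$ of Lemma~\ref{HBilinearGhosh}. Your extra remarks are sound and fill in bookkeeping the paper leaves implicit: the range $H<h\le H'$ has to be read as containing $h=1$, and at $H=1$ the third bound (the one actually proved in the paper) coincides with the second up to the factor $1+(W/N)^{1/3}\le 2$.
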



\section{Proof of Theorem~\ref{OneSum}}

\begin{proof}[Proof of Theorem~\ref{OneSum}]
   Let us start by considering the following decomposition of
   $S(\alpha, N)$ as linear and bilinear forms by Lemma~\ref{OR-KV}. We have 
       \begin{align}\label{main equation}
            S(\alpha,N) = \sum_{\substack{d<M\\d\mid P_{4,3}(z)}} \mu(d)\sum_{\substack{n\equiv 0(d)\\N/2 < n\leq N\\ n \equiv 1[4]}} e(\alpha n^2) - &\sum_{\substack{z \leq p\leq \sqrt{N}\\ N/2< mp \leq N \\mp \equiv 1[4]}}\rho(m) 1_{p\in P_{4,3}} e(\alpha m^2p^2)
            \notag\\
            - &\int_0^1 \sum_{\substack{k\ell\geq M\\M\leq \ell \leq Mz\\ N/2 < k\ell \leq N \\k\ell \equiv 1[4]}}\alpha_\ell(t)\beta_k(t) e(\alpha \ell^2k^2)dt +O(N/z). 
        \end{align}
        Here the first sum is linear, say $S_{1}$.  Let us localize $d$ in $D$ and $D'$, where $D' =\min\{ 2D, M\}$. Therefore we have,
        \begin{equation*}
            S_{1} \leq 2\bigg(\frac{\log M}{\log 2}+1\bigg)\max _{\substack{D\leq M\\ N/4D <T \leq N/D}}\bigg| \sum_{\substack{D<d\leq D'\\ d \mid P_{4,3}(z)}}\mu(d) \sum_{\substack{T< m \leq T'\\ md \equiv 1[4]\\ N/2< md \leq N}} e(\alpha (md)^{2})\bigg|
          \end{equation*}
          where m is localized in $T$ and $T' = \min \{ 2T, N/D\}$.
        We bound this sum using Lemma \ref{linear+congruence}, i.e,
        \begin{equation*}
            S_{1} \ll (Nq)^{\epsilon}\bigl( N^{1/2}M+NM^{1/2}q^{-1/2}+(Mq)^{1/2}\bigr).
        \end{equation*}
        Second and third terms of eq. \eqref{main equation} are bilinear sums say $S_{2}$ and $S_{3}$ respectively. In $S_{2}$, localizing $p$ in $(P,P']$ and $m$ in $(T,T']$, and in $S_{3}$, localizing $\ell$ in $(L,L']$ and $k$ in $(K,K']$ we get,
        \begin{equation*}
            S_{2} \leq 2\bigg(\frac{\log N}{\log 2} +1\bigg)\max_{\substack{z \leq P\leq \sqrt{N}\\ \sqrt{N}/2 \leq T\leq N/z}}\bigg| \sum_{\substack{P< p\leq P'\\T< m\leq T'\\ mp\equiv1[4]\\ N/2 < mp \leq N}}\rho(m)1_{p\in P_{4,3}}e(\alpha m^{2}p^{2})\bigg|
          \end{equation*}
          and
        \begin{equation*}
            S_3 \leq 2\bigg(\frac{\log Mz}{\log 2}+1\bigg) \max_{\substack{M\leq L\leq Mz\\ N/(2Mz) < K\leq N/M}}\bigg| \sum_{\substack{K< k\leq K'\\L< \ell \leq L'\\ N/2 < k\ell \leq N \\k\ell \equiv 1[4]}}\alpha_\ell(t)\beta_k(t) e(\alpha \ell^2k^2) \bigg|.
        \end{equation*}
        $S_2$ and $S_3$ may both be treated by Lemma~\ref{Bilinear1}:
        \begin{equation*}
          S_2+S_{3} \ll (Nq)^{\epsilon}
          \bigl(Nq^{-1/4}+N^{1/2}q^{1/4}
          +N^{7/8}+N^{1/2}\max(N/z, N/M)^{1/2}
          \bigr).
        \end{equation*}
   Combining the bounds for $S_{1},S_{2}$ and $S_3$, and by considering $q\leq N^{2}$ and $M\leq \sqrt{N}$ we have
\begin{align*}
  \frac{S(\alpha, N)}{N^{\epsilon}}
  \ll
  &\frac{N}{q^{1/4}}
  +N^{1/2}q^{1/4}
  + N^{7/8}
  \\&+N^{1/2}M+NM^{1/2}q^{-1/2}+(Mq)^{1/2}
  +\frac{N}{\min(z, M)^{1/2}}
  +Nz^{-1}.
\end{align*}
The last summand may be omitted, as well as $N^{1/2}M$ (compare to
$N^{7/8}$), and the best is to take $z=M$ so we have
\begin{equation*}
  \frac{S(\alpha, N)}{N^{\epsilon}}
  \ll
  \frac{N}{q^{1/4}}
  +N^{1/2}q^{1/4}
  + N^{7/8}
  +NM^{1/2}q^{-1/2}+(Mq)^{1/2}
  +\frac{N}{M^{1/2}}
.
\end{equation*}
\smallskip
\noindent{\bf When $q\le N$:} we have $N/q^{1/4}\ge N^{1/2}q^{1/4}$
and $(Mq)^{1/2}\le N/\sqrt{M}$. This calls for the choice $M=q^{1/2}$
which is indeed at most~$\sqrt{N}$,
getting the bound
\begin{equation*}
  \frac{S(\alpha, N)}{N^{\epsilon}}
  \ll
  \frac{N}{q^{1/4}}
  +N^{1/2}q^{1/4}
  + N^{7/8}\qquad(q\le N).
\end{equation*}
\smallskip
\noindent{\bf When $N\le q\le N^2$:} we have $N/q^{1/4}\le N^{1/2}q^{1/4}$
and $N(M/q)^{1/2}\le N/\sqrt{M}$. This calls for the choice $M=N/q^{1/2}$
which is indeed at most~$\sqrt{N}$,
getting again the bound
\begin{equation*}
  \frac{S(\alpha, N)}{N^{\epsilon}}
  \ll
  \frac{N}{q^{1/4}}
  +N^{1/2}q^{1/4}
  + N^{7/8}\qquad(N\le q\le N^2).
\end{equation*}
This ends the proof of our theorem.
\end{proof}

\section{Proof of Theorem~\ref{SeveralSum}}
\begin{proof}[Proof of Theorem~\ref{SeveralSum}]
    By Lemma \ref{OR-KV}, we have
       \begin{align}\label{hmain}
           \sum_{h\leq H} | S(h\alpha,N)| 
           =& \sum_{h\leq H} 
           \sum_{\substack{d<M\\d\mid P_{4,3}(z)}} \mu(d)
           \sum_{\substack{n\equiv 0(d)\\ n\equiv 1[4]\\
           N/2 < n \leq N}} e(\alpha h n^2) 
           \notag\\&- \sum_{h\leq H}\sum_{\substack{N/2 <mp \leq N\\z \leq p\leq \sqrt{N}\\ mp \equiv 1[4]}}\rho(m) 1_{p\in P_{4,3}} 
           e(\alpha h m^2p^2)
           \notag\\&
           - \int_0^1 \sum_{h\leq H}
           \sum_{\substack{k\ell\geq M\\M\leq \ell \leq Mz \\ N/2 < k\ell \leq N \\ k\ell \equiv 1[4]}}
           \alpha_\ell(t)\beta_k(t) e(\alpha h \ell^2k^2) +O({HN}/{z}). 
        \end{align}
     The first sum here is linear, say $S_{1}$.  Let us localize $d$ in $D$ and $D'$, where $D' =\min\{ 2D, M\}$ and m is localized in $T$ and $T' = \min \{ 2T, N/D\}$. Therefore we have,
        \begin{equation*}
            S_{1} \leq 2 \bigg( \frac{\log M}{\log 2}+1 \bigg)  \max _{\substack{D\leq M\\N/4D <  T \leq N/D}}\sum_{h\leq H}\bigg| \sum_{\substack{D<d\leq D'\\ d \mid P_{4,3}(z)}}\mu(d) \sum_{\substack{T< m \leq T'\\ md \equiv 1[4]\\ N/2 < md \leq N}} e(\alpha h(md)^{2})\bigg|.
        \end{equation*}
We bound this sum using Lemma \ref{HLinear+congruence}, i.e,
        \begin{equation*}
            S_{1} \ll (Nq)^{\epsilon}\bigg( HN^{1/2}M+HNM^{1/2}q^{-1/2}+(HMq)^{1/2}\bigg).
        \end{equation*}
        Second and third terms of Eq. \eqref{hmain} are bilinear sums say $S_{2}$ and $S_{3}$ respectively. In $S_{2}$, localizing $p$ in $(P,P']$ and $m$ in $(T,T']$, and in $S_3$, localizing $\ell$ in $(L,L']$ and $k$ in $(K,K']$ we get,
        \begin{equation*}
            S_{2} 
            \leq 
            2\bigg(\frac{\log N}{\log 2} +1\bigg)
            \max_{\substack{z \leq P\leq \sqrt{N}\\ \sqrt{N}/2 \leq T\leq N/z}}
            \sum_{h\leq H}\bigg| \sum_{\substack{P< p\leq P'\\T< m\leq T'\\ mp\equiv1[4]\\ N/2 < mp \leq N}}
            \rho(m)1_{p\in P_{4,3}}e(\alpha hm^{2}p^{2})\bigg|
        \end{equation*}
        and
        \begin{equation*}
            S_3 \leq 2\bigg(\frac{\log Mz}{\log 2}+1\bigg) \max_{\substack{M\leq L\leq Mz\\ N/(2Mz) < K\leq N/M}}\sum_{h\leq H}\bigg| \sum_{\substack{K< k\leq K'\\L< \ell \leq L'\\ N/2 < k\ell \leq N \\k\ell \equiv 1[4]}}\alpha_\ell(t)\beta_k(t) e(\alpha h\ell^2k^2) \bigg|.
          \end{equation*}
        In $S_2$, we have $P/2 \leq T$. Considering $H \leq z^{2/3}\le
        (NT^{-1})^{2/3}$ in the sum say $S_2$ and by applying the third
        bound from Lemma \ref{HBilinearGhosh} we have, 
        \begin{align*}
          S_{2} \ll (Nq)^{\epsilon}
          \big( HNq^{-1/4}+H^{3/4}Nz^{-1/2}+HN^{7/8}+H^{3/4}N^{1/2}q^{1/4}
            \big).
        \end{align*}
        In case we do not have $H \leq z^{2/3}$, then we cannot ensure
        that $H\le (N/T)^{2/3}$, so we use the second bound from Lemma~\ref{HBilinearGhosh}:
        \begin{align*}
          S_2 \ll (Nq)^\epsilon
          \big( HNq^{-1/4}+HNz^{-1/2}+HN^{7/8}+H^{3/4}N^{1/2}q^{1/4}\big).
        \end{align*}
        On considering $S_2$ and $S_3$, we readily see that there is
        no point in taking~$M\ge z$, so we take $M=z$ and get the
        same bounds for $S_3$ as for $S_2$.

        Combining the bounds for $S_1$, $S_2$ and $S_3$, and by
        considering $q\leq N^{2}$, we get 
\begin{align*}
  \sum_{h\leq H}\frac{|S(\alpha h ,N)|}{N^{\epsilon}}
  \ll&
       HN^{1/2}z+HNz^{1/2}q^{-1/2}+(Hzq)^{1/2}+ HNq^{-1/4}
       \\&+HNz^{-1/2} +HN^{7/8}+H^{3/4}N^{1/2}q^{1/4}  
\end{align*}
where the summand $HNz^{-1/2}$ may be reduced to $H^{3/4}Nz^{-1/2}$
when $H \le  z^{2/3} $. We have $HN^{1/2}z\le HN^{7/8}$, which enables
us to discard the first summand.
We may thus rewrite the above bound in the form
\begin{align}
  \label{ref}
  \sum_{h\leq H}\frac{|S(\alpha h ,N)|}{N^{\epsilon}}
  \ll&
       HNq^{-1/4}+HNz^{1/2}q^{-1/2}+(Hzq)^{1/2}
       \\\notag&+(H \text{ or } H^{3/4})Nz^{-1/2} +HN^{7/8}+H^{3/4}N^{1/2}q^{1/4},
\end{align}
the trivial bound being $HN$.
\smallskip

\noindent\textbf{When $q\le N$:} Combining $HNz^{1/2}q^{-1/2}$ and $HNz^{-1/2}$
we get the choice $z=q^{1/2}$ which is at most~$\sqrt{N}$.
    \begin{equation*}
      \sum_{h\leq H}\frac{|S(h\alpha,N)|}{N^{\epsilon}}
      \ll HNq^{-1/4}+HN^{7/8}
      +H^{1/2}q^{3/4}+H^{3/4}N^{1/2}q^{1/4}.
    \end{equation*}
    The third summand is less than the fourth one and the fourth one
    is not more than the second one, so we get
    \begin{equation*}
      \sum_{h\leq H}\frac{|S(h\alpha,N)|}{N^{\epsilon}}
      \ll HNq^{-1/4}+HN^{7/8},
      \quad(q\le N).
    \end{equation*}
    This bound is good for small values of $q$. It is optimal with
    respect to what we may expect to infer from~\eqref{ref}. 
    When $H$ is smaller than $z^{2/3}$, we may balance
    $HNz^{1/2}q^{-1/2}$ together with $H^{3/4}Nz^{-1/2}$, leading to
    the choice $z=q^{1/2}/H^{1/4}$, which would be legal when $H\le
    q^{2/7}$. However this smaller choice of $z$ would not help as
    the summand $HNq^{-1/4}$ will dominate. This is the bound when
    $q\le N$.
    
\smallskip\noindent\textbf{When} comparing $HN^{1/2}z$ and
$HNz^{-1/2}$ we get the choice  $z=N^{1/3}$.
    \begin{equation*}
      \sum_{h\leq H}\frac{|S(h\alpha,N)|}{N^\epsilon}
      \ll
      HNq^{-1/4}
      +HN^{7/6}q^{-1/2}
      +H^{1/2}N^{1/6}q^{1/2}
      +HN^{7/8}+H^{3/4}N^{1/2}q^{1/4}.
    \end{equation*}
    We may further assume $q>N$ since we already an ``optimal'' bound otherwise.
    As a consequence the first summand is dominated by $HN^{7/8}$ and
    so is the second one, getting
    \begin{equation*}
      \sum_{h\leq H}\frac{|S(h\alpha,N)|}{N^\epsilon}
      \ll
      H^{1/2}N^{1/6}q^{1/2}
      +HN^{7/8}+H^{3/4}N^{1/2}q^{1/4},\quad (q\ge N).
    \end{equation*}
    This gives the bound
    \begin{equation*}
      \sum_{h\leq H}\frac{|S(h\alpha,N)|}{N^\epsilon}
      \ll
      \begin{cases}
        HN^{7/8},\text{when $N\le q\le HN^{17/12}$},\\
        H^{1/2}N^{1/6}q^{1/2},\text{when $ q\ge HN^{17/12}$}.
      \end{cases}
    \end{equation*}
    
    When $H$ is small, we have to compare $HN^{1/2}z$ and
    $H^{3/4}Nz^{-1/2}$, getting the choice  $z=N^{1/3}/H^{1/6}$. This
    is legal when $H\le N^{2/5}$, getting
    \begin{equation*}
      \sum_{h\leq H}\frac{|S(h\alpha,N)|}{N^\epsilon}
      \ll
      HNq^{-1/4}
      +H^{11/12}N^{7/6}q^{-1/2}
      +H^{1/2}N^{1/6}q^{1/2}
      +HN^{7/8}+H^{3/4}N^{1/2}q^{1/4}.
    \end{equation*}
    We may again discard $HNq^{-1/4}$ and also
    $H^{11/12}N^{7/6}q^{-1/2}$ by comparing them with~$HN^{7/8}$ so we
    are back with the first case.

\smallskip\noindent\textbf{We finally } may compare $(Hzq)^{1/2}$ and
$HNz^{-1/2}$, leading to the choice $z=NH^{1/2}q^{-1/2}$.
This choice is only valid for $q \geq HN$ to ensure the condition
$z\leq \sqrt{N}$, but since we already have the ``optimal'' bound when
$q\le HN^{17/12}$, we may assume $q>HN^{17/12}$.
Thus for $HN^{17/12} \leq q \leq N^2$, we have the bound
\begin{align*}
  \sum_{h\leq H}\frac{|S(h\alpha,N)|}{N^\epsilon}
  &\ll
  H^{5/4}N^{3/2}q^{-3/4}+HN^{7/8}+H^{3/4}N^{1/2}q^{1/4}
  \\&\ll
  H^{5/4}N^{3/2}q^{-3/4}+H^{3/4}N^{1/2}q^{1/4}
  \\&\ll
  H^{3/4}N^{1/2}q^{1/4}.
\end{align*}
Yet again we may compare instead $(Hzq)^{1/2}$ and
$H^{3/4}Nz^{-1/2}$, leading to the choice $z=NH^{1/4}q^{-1/2}$.
This is legal when $q\ge NH^{1/2}$ and $H^{3/2}\le NH^{1/4}q^{-1/2}$,
i.e. $H\le N^{4/5}/q^{2/5}$. We obtain the bound
\begin{align*}
  \sum_{h\leq H}\frac{|S(\alpha h ,N)|}{N^{\epsilon}}
  &\ll
    H^{9/8}N^{3/2}q^{-3/4}+H^{5/8}N^{1/2}q^{1/4}+HN^{7/8}+H^{3/4}N^{1/2}q^{1/4}
  \\&\ll
  H^{9/8}N^{3/2}q^{-3/4}+HN^{7/8}+H^{3/4}N^{1/2}q^{1/4}
  \ll H^{3/4}N^{1/2}q^{1/4}.
\end{align*}

  \bigskip

Now by combining these, we have
$$\sum_{h\leq H}\frac{|S(h\alpha,N)|}{N^\epsilon} \ll \begin{cases}
     HNq^{-1/4} \quad  \quad \quad \text{if } q \leq N^{1/2}\\
     HN^{7/8} \quad \quad \quad \quad \text{if } \ \ N^{1/2} < q\leq HN^{17/12}\\
     H^{1/2}N^{1/6}q^{1/2} \quad \quad \quad \quad \text{if } \ \ HN^{17/12} < q\leq N^2\\
     H^{3/4}N^{1/2}q^{1/4} \quad \text{if }\ \ HN^{17/12} < q \leq N^2.
\end{cases}$$\\
This can be slightly degraded and combined together to give a single estimate as,
\begin{equation*}
    \sum_{h\leq H} \frac{|S(h\alpha,N)|}{N^\epsilon} \ll  HNq^{-1/4}+ HN^{7/8}+H^{3/4}N^{1/2}q^{1/4}.
\end{equation*}
This proves the theorem as, in the case $q\ge N^2$, the proposed bound is trivial.
\end{proof}

The research work of the first author is supported by Raman-Charpak Fellowship. We thank Aix Marseille Universit\'e and CNRS laboratory for the hospitality and the research facilities they provided.

\printbibliography

\end{document}